\newtheorem{lemma}{Lemma}[section]
\newtheorem{theorem}[lemma]{Theorem}
\newtheorem{corollary}[lemma]{Corollary}
\newtheorem{proposition}[lemma]{Proposition}
\newtheorem{remark}[lemma]{Remark}
\newtheorem{definition}[lemma]{Definition}
\def\Om{\Omega}
\def\R{\mathbb{R}}
\def\E{\widetilde{E}}
\def\e{\varepsilon}
\def\wE{\widetilde{E}}
\def\Div{\textup{div}\,}
\def\x{{\bar x}}
\def\t{{\bar t}}
\begin{document}

\title{Mean curvature flow with obstacles}

\author{
        L. Almeida
        \thanks{CNRS, UMR 7598, Laboratoire Jacques-Louis Lions, France, \texttt{luis@ann.jussieu.fr}} 
        \thanks{UPMC Univ Paris 06, UMR 7598, Laboratoire Jacques-Louis Lions, France},
        \and A. Chambolle
        \thanks{CMAP, Ecole Polytechnique, CNRS, France,
        \texttt{antonin.chambolle@cmap.polytechnique.fr}},
        \and M. Novaga
        \thanks{Dip. di Matematica, Universit\`a di Padova,
        via Trieste 63, 35121 Padova, Italy,
        \texttt{novaga@math.unipd.it}}
}
\date{}
\maketitle

\begin{abstract}
We consider the evolution of fronts by mean curvature in the presence of obstacles.
We construct a weak solution to the flow
by means of a variational method, corresponding to an implicit time-discretization scheme. 
Assuming the regularity of the obstacles, in the two-dimensional case 
we show existence and uniqueness of a regular solution before the onset of singularities. 
Finally, we discuss an application of this result to the positive mean curvature flow.
\end{abstract}

\noindent \textit{MSC.} 35R37 35R45  49J40 49Q20 53A10

\noindent \textit{Keywords:} obstacle problem, mean curvature flow, minimizing movements.
\section{Introduction}

Motivated by several models in physics, biology and material science, 
there has been a growing interest in recent years
towards the rigorous analysis of front propagation in heterogeneous media, 
see \cite{Phillips,BCN,CB,DKY,CLS} and references therein.
In this paper, we analyze the evolution by mean curvature of an interface 
in presence of hard obstacles which can stop the motion. 
Even if this is a prototypical model of energy driven front propagation in a medium with obstacles,
to our knowledge there are no rigorous results concerning existence, uniqueness and regularity of the flow. 
On the other hand, we mention that the corresponding stationary problem, the so-called {\em obstacle problem}, 
has been studied in great detail, see \cite{Mir, Caf} and references therein.

\smallskip

To be more precise, given an open set $\Omega\subset\R^n$, 
we consider the evolution of a hypersurface $\partial E(t)$, with the constraint  
$E(t)\subset \Om$ for all $t\ge 0$, where $\Omega$ is an open subset of $\R^n$ and $\R^n\setminus \Om$
represents the obstacles. The corresponding geometric equation formally reads 
(we refer to Section \ref{secobst} for a precise definition):
\begin{equation}\label{formalevol}
v(x) = \left\{\begin{array}{ll}
\kappa(x) & {\rm if\ }x\in\Om
\\
\max(\kappa(x),0) & {\rm if\ }x\in\partial\Om
\end{array}\right.
\end{equation}
where $v$ and $\kappa$ denote respectively the normal inward velocity and 
the mean curvature of $\partial E(t)$ .
Notice that the right-hand side of \eqref{formalevol} is discontinuous on $\partial\Om$,
so that the classical viscosity theory \cite{users} does not apply to this case 
(see however \cite{DL,BDL} for a possible approach in this direction).

We are particularly interested in existence and uniqueness of smooth (that is $C^{1,1}$) solutions to \eqref{formalevol}. 
We tackle this problem by means of a variational method first introduced in \cite{ATW,LuckhausSturz} (see also~\cite{AmbrosioMM} for a simpler
description of the same approach),
which is based on an implicit time-discretization scheme for \eqref{formalevol}.

After showing the consistency of the scheme with regular solutions (Theorem \ref{thcon}), we obtain 
a comparison principle and uniqueness of smooth solutions in any dimensions (Corollary \ref{corcomp}).
Moreover, in the two-dimensional case we are also able to prove local in time existence of solutions (Theorem \ref{thshort}). 
Notice that in general one cannot expect existence of regular solutions for all time, due to the presence of singularities of the flow (even in dimension~2). 
On the other hand, due to the presence of the obstacles,
regular solutions do not necessarily vanish in finite time
and may exist for all times. Eventually, we apply our result to the positive curvature flow 
in two dimensions, obtaining a short time existence and uniqueness result (Corollary \ref{corpos}) for $C^{1,1}$-regular flows. 
Indeed, such evolution can be seen as a curvature flow where the obstacle is given by the complementary of the initial set. 

We point out that the study of the positive curvature flow
in Section~\ref{secpos} is related to
some biological models which originally motivated our work:
in several recent studies of actomyosin cable contraction in morphogenesis and tissue repair there is increasing evidence that the contractile structure forms only in the positive curvature part of the boundary curve (see \cite{AlmDem,AlmBagHab} and references therein). Since the contraction of such actomyosin structures can be associated with curvature terms (see \cite{Hutson,Alm2009,Alm2011}), this leads very naturally to consider the positive curvature flow problem. 

Notice that a set evolving according to this law is always nonincreasing with respect to inclusion,
which is a feature not satisfied by the usual curvature flow. This shows why assembling the contractile structure only in the positive curvature portion of the boundary (instead of all around) and thus doing positive
curvature flow (instead of usual curvature flow) is an interesting way to evolve from the biological
point of view: it corresponds to making our wound (or hole) close in a
manner where we never abandon any portion of the surface we have already
managed to cover since we started closing.

We also remark that the positive curvature flow is useful
in the context of image analysis \cite[p. 204]{Sethian},
and appears naturally in some differential games~\cite{KohnSerfaty}.

\section{Notation}

Given an open set $A\subseteq\R^n$, a function $u\in
L^1(A)$ whose distributional gradient $Du$ is
a Radon measure with finite total variation in $A$ is called a function of bounded variation,
and the space of such functions will be denoted by $BV(A)$. The total
variation of $Du$ on $A$ turns out to be
\begin{equation}
\label{vartot}
\sup \left\{\int_A u~ {\Div z}~dx : z \in {C^\infty_0(A; \R^n),}~\vert z(x)\vert \leq 1~\forall x\in A \right\},
\end{equation}
and will be denoted by $\vert Du\vert (A)$ or by $\int_A \vert Du\vert$. 
The map $u \to\vert D u\vert(A)$ is $L^1(A)$-lower
semicontinuous, and $BV(A)$ is a Banach
space when endowed with the norm $\Vert u\Vert : = \int_A \vert u \vert \,dx +
\vert Du\vert (A)$. We refer to \cite{AFP} for a comprehensive treatment of the subject.

We say that a set $E$ satisfies the exterior (resp. interior) $R$-ball condition, for some $R>0$, 
if for any $x\in\partial E$ there exists a ball $B_R(x')$, 
with $x\in \partial B_R(x')$ and $B_R(x')\cap E=\emptyset$ (resp. $B_R(x')\subseteq E$). 
Notice that a set $E$ with compact boundary satisfies both the interior
and the exterior $R$-ball condition, for some $R>0$, if and only if $\partial E$ is of class $C^{1,1}$.

\section{The implicit scheme}\label{SecSch}

Following the celebrated papers~\cite{ATW,LuckhausSturz}, 
we shall define an implicit time discrete scheme for \eqref{formalevol}.
As a preliminary step,
we consider solutions of the Total Variation minimization problem
with obstacles; the scheme is then defined in Definition~\ref{defscheme} below. 

Let $B\subset\R^n$ be an open set and let $v:B\to [-\infty,\infty)$
be a measurable function, with $v^+\in L^2(B)$.
Following \cite{ATW,ChamMCM,LuckhausSturz}, 
given $h>0$ and $f \in L^2(B)$, we let $S_{h,v}(f,B)\in L^2(B)\cap BV(B)$ be the unique minimizer of the problem 
\begin{equation}\label{eqfob}
\min_{u\ge v}\int_{B} |Du| + \frac{1}{2h}\int_{B}(u-f)^2\,dx.
\end{equation}

We have the following comparison result (see \cite[Lemma 2.1]{ChamMCM}).

\begin{proposition}\label{promono}
The operator
$S_{h,\cdot}(\cdot,B)$ is monotone, in the sense that $u_1=S_{h,v_1}(f_1,B)
\ge u_2=S_{h,v_2}(f_2,B)$ whenever $f_1\ge f_2$ and $v_1\ge v_2$~a.e.
\end{proposition}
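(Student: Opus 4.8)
The plan is to use the standard truncation/comparison technique for TV-type functionals, testing each minimizer against the competitor obtained by truncating with the other. Set $u_1 = S_{h,v_1}(f_1,B)$ and $u_2 = S_{h,v_2}(f_2,B)$, and consider the competitors $u_1\wedge u_2 = \min(u_1,u_2)$ and $u_1\vee u_2 = \max(u_1,u_2)$. First I would check that these are admissible competitors for the respective problems: since $u_1\ge v_1\ge v_2$ and $u_2\ge v_2$ a.e., we have $u_1\wedge u_2\ge v_2$ a.e., so it is admissible for the problem defining $u_2$; and since $u_2\ge v_2$ while $v_1\ge v_2$ does not immediately give $u_1\vee u_2\ge v_1$ — but in fact $u_1\vee u_2\ge u_1\ge v_1$, so $u_1\vee u_2$ is admissible for the problem defining $u_1$. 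Both clearly lie in $L^2(B)\cap BV(B)$.

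Next I would exploit the submodularity of the total variation, namely the inequality
\begin{equation}\label{submod}
\int_B|D(u_1\vee u_2)| + \int_B|D(u_1\wedge u_2)| \ \le\ \int_B|Du_1| + \int_B|Du_2|,
\end{equation}
which holds for any $u_1,u_2\in BV(B)$ (it follows, e.g., from the coarea formula applied level set by level set, or from the general theory of convex functionals of the gradient). Combined with the pointwise algebraic identity
\begin{equation}\label{quadid}
(u_1\vee u_2 - f_1)^2 + (u_1\wedge u_2 - f_2)^2 \ =\ (u_1-f_1)^2 + (u_2-f_2)^2 - 2(f_1-f_2)(u_1-u_2)^-,
\end{equation}
where $(u_1-u_2)^- = \max(u_2-u_1,0)$ is supported on $\{u_2>u_1\}$, I would add up the two minimality inequalities: since $u_1$ minimizes its problem and $u_1\vee u_2$ is a competitor, and likewise $u_2$ minimizes its problem and $u_1\wedge u_2$ is a competitor, summing gives
\[
\frac{1}{2h}\int_B\Big[(u_1-f_1)^2+(u_2-f_2)^2\Big]\,dx \ \le\ \frac{1}{2h}\int_B\Big[(u_1\vee u_2-f_1)^2+(u_1\wedge u_2-f_2)^2\Big]\,dx
\]
after cancelling the TV terms using \eqref{submod}. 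Plugging in \eqref{quadid}, this forces $\int_B (f_1-f_2)(u_1-u_2)^-\,dx\le 0$; since $f_1\ge f_2$ a.e. and $(u_1-u_2)^-\ge 0$, the integrand is nonnegative, hence $(u_1-u_2)^- = 0$ a.e., i.e.\ $u_1\ge u_2$ a.e.

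The only genuine subtlety — the part I would be most careful about — is justifying the submodularity \eqref{submod} of the total variation with the correct constants and in the generality needed (arbitrary open $B$, no regularity on $u_i$), and making sure the truncated functions remain admissible for the obstacle constraint; the quadratic bookkeeping in \eqref{quadid} is routine. Since the problem \eqref{eqfob} is strictly convex, uniqueness of the minimizers is already built in, so no further argument is needed there; the displayed monotonicity is exactly the claim. Alternatively, one could invoke \cite[Lemma 2.1]{ChamMCM} directly, as the statement references, the argument above being essentially its proof adapted to the present notation.
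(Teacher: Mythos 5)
Your approach is exactly the one the paper takes (truncate each minimizer by the other, use submodularity of the total variation, sum the two minimality inequalities), and your admissibility checks and quadratic identity are both correct. The genuine gap is in the very last deduction. Summing the two minimality inequalities, cancelling the total variation terms by submodularity and substituting your identity gives
\[
0\ \le\ -\frac{1}{h}\int_B (f_1-f_2)\,(u_1-u_2)^-\,dx\,,
\]
hence $(f_1-f_2)(u_1-u_2)^-=0$ a.e. This forces $(u_1-u_2)^-=0$ only on the set $\{f_1>f_2\}$ and yields no information on $\{f_1=f_2\}$. In particular your argument, as written, proves nothing in the case $f_1=f_2$ with $v_1\ge v_2$ --- which is precisely the case the paper needs (e.g.\ in Remark~\ref{remalm}, where $S_{h,d_\Om}(d_E)$ is compared with $S_{h,-\infty}(d_E)$ for the \emph{same} datum $d_E$).

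The missing step is the one the paper's one-line proof actually leans on: uniqueness. Since each minimality inequality goes one way and their sum, after submodularity and your identity, goes the other way, both inequalities must be equalities; in particular $u_1\vee u_2$, which you correctly showed to be admissible, attains the minimum of the problem defining $u_1$. Strict convexity of the fidelity term makes that minimizer unique, so $u_1\vee u_2=u_1$, i.e.\ $u_2\le u_1$ a.e. You do mention strict convexity at the end, but only to remark that uniqueness is ``built in''; you never invoke it to conclude, and without it the argument does not close. With that one line added, your proof is complete and coincides with the paper's.
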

\begin{proof}
The idea is simply to compare the sum of the energies of $u_1$ and $u_2$,
with the sum of the energy of $u_1\wedge u_2$ (which is admissible in
the problem defining $u_2$) and of $u_1\vee u_2$ (which is admissible
in the problem defining $u_1$). The conclusion follows from the
uniqueness of the solution to~\eqref{eqfob}.
\end{proof}

\begin{proposition}\label{probound}
Assume $f, v^+\in L^\infty(B)$: then $u=S_{h,v}(f,B)\in L^\infty(B)$ and 
\[
\|S_{h,v}(f,B)\|_{L^\infty(B)}\le \max\left( \|f\|_{L^\infty(B)},
\|v^+\|_{L^\infty(B)}\right).
\]
\end{proposition}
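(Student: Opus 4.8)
The statement is an $L^\infty$ bound on the minimizer $u = S_{h,v}(f,B)$ of the obstacle problem \eqref{eqfob}. The plan is to use the truncation technique that is standard for total variation minimization: compare $u$ with its truncation at the level $M := \max(\|f\|_{L^\infty(B)}, \|v^+\|_{L^\infty(B)})$ and show that the truncation has no larger energy, so that by uniqueness $u$ must already coincide with its truncation. One must be a little careful because the constraint $u \ge v$ is one-sided, so the truncation from below and from above play asymmetric roles.

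First I would set $M$ as above and consider the competitor $\tilde u := (u \wedge M) \vee (-M) = \max(-M, \min(u, M))$. I should check admissibility: since $v \le v^+ \le \|v^+\|_{L^\infty(B)} \le M$ a.e.\ (wherever $v > -\infty$), we have $u \ge v$ implies $u \wedge M \ge v \wedge M = v$, and then $(u\wedge M)\vee(-M) \ge v$ as well since $v \le M$; so $\tilde u \ge v$ a.e.\ and $\tilde u$ is admissible in \eqref{eqfob}. Next, the total variation term does not increase under truncation: $|D\tilde u|(B) \le |Du|(B)$, because composing a $BV$ function with the $1$-Lipschitz map $t \mapsto (t\wedge M)\vee(-M)$ does not increase the total variation (this is the coarea formula / chain rule for $BV$ functions, from \cite{AFP}). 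Finally, the fidelity term strictly decreases on the set $\{|u| > M\}$ if that set has positive measure: since $|f| \le M$ a.e., on $\{u > M\}$ we have $|u - f| \ge u - M > 0$ while $|\tilde u - f| = |M - f| \le u - M$, wait — more carefully, on $\{u>M\}$, $|\tilde u - f| = M - f \le |u-f|$ with strict inequality wherever $u > M$; symmetrically on $\{u < -M\}$, $|\tilde u - f| = |-M - f| \le |u - f|$ with strict inequality there. Hence $\int_B (\tilde u - f)^2 \le \int_B (u-f)^2$ with strict inequality unless $|\{|u|>M\}| = 0$.

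Combining, $\tilde u$ is admissible with energy no larger than that of $u$, and strictly smaller unless $|u| \le M$ a.e.; by the uniqueness of the minimizer (stated after \eqref{eqfob}), $\tilde u = u$, i.e.\ $\|u\|_{L^\infty(B)} \le M$, which is the claim. I do not expect any serious obstacle here; the only point requiring a moment's attention is the admissibility check, i.e.\ verifying that truncating from above by $M$ does not destroy the constraint $u \ge v$ — this is exactly where the hypothesis $\|v^+\|_{L^\infty(B)} \le M$ is used, and where it matters that we only need an obstacle from below. One should also remark that lowering the lower truncation level is harmless precisely because $v$ may take the value $-\infty$, so $-M$ is always $\ge -\infty$; the truncation from below is not needed for admissibility but is included to get the two-sided bound when, e.g., $f$ changes sign.
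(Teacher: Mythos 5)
Your proof is correct and follows essentially the same route as the paper's: compare $u$ with its truncation $(u\vee -M)\wedge M$ at the level $M=\max\bigl(\|f\|_{L^\infty(B)},\|v^+\|_{L^\infty(B)}\bigr)$, check admissibility against the obstacle, note that truncation does not increase the total variation while strictly decreasing the fidelity term on $\{|u|>M\}$, and conclude by uniqueness of the minimizer. The paper states this in two lines; you have merely filled in the details, including the one genuinely relevant point (that truncating from above preserves $u\ge v$ because $v\le M$).
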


\begin{proof}
Again, the proof is trivial. It is enough check that the energy of
$u_M=(u\vee -M)\wedge M$ is less than the energy of $u$, while
$u_M$ is admissible as soon as $M\ge \max\left( \|f\|_{L^\infty(B)},
\|v^+\|_{L^\infty(B)}\right)$.
\end{proof}

\begin{theorem}\label{teoglob}
Let $v:\R^n\to [-\infty,+\infty)$ be a measurable function with
$v^+\in L^\infty_{\rm loc}(\R^n)$, $f\in L^\infty_{\rm loc}(\R^n)$,
and $h>0$. There exists a unique function 
$u\in L^\infty_{\rm loc}(\R^n)
\cap BV_{\rm loc}(\R^n)$, which we shall denote by $S_{h,v}(f)$,
such that for all $R>0$ and $p\in (n,+\infty)$ there holds
\[
\lim_{M\to\infty}\|u-S_{h,v}(f,B_M)\|_{L^p(B_R)}=0.
\]

This function is characterized by the fact that $u\ge v$ a.e.,
and for any $R$ and any $\varphi\in BV(\R^n)$ with support in $B_R$
and $u+\varphi\ge v$ a.e.,
\begin{equation*}
\int_{B_R} |Du|+ \frac{1}{2h} \int|u-f|^2\,dx \ \le\ 
\int_{B_R} |D(u+\varphi)|+ \frac{1}{2h} \int|u+\varphi-f|^2\,dx\,.
\end{equation*}
\end{theorem}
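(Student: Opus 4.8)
The plan is to construct $u$ as a limit (in fact a monotone limit) of the finite-domain minimizers $u_M := S_{h,v}(f,B_M)$ as $M\to\infty$, and then verify the claimed local minimality and uniqueness. First I would establish a uniform local bound: by Proposition~\ref{probound} applied on each $B_M$ with the local $L^\infty$ norms of $f$ and $v^+$ on $B_M$ (or, cleaner, freeze $R$ and use that on $B_R$ the $L^\infty$ norm of $u_M$ is controlled by $\max(\|f\|_{L^\infty(B_R)},\|v^+\|_{L^\infty(B_R)})$ via a comparison that localizes — more carefully, one truncates and compares only inside $B_R$, but an honest argument uses the maximum over a slightly larger ball), we get that $\{u_M\}_{M\ge R+1}$ is bounded in $L^\infty(B_R)$, and then testing the minimality of $u_M$ against the competitor $\max(u_M,v)\wedge(\text{large constant})$ restricted appropriately — or simply against a fixed admissible function like $C:=\sup_{B_M}v^+$ extended suitably — yields a uniform bound on $\int_{B_R}|Du_M|$. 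Hence, up to subsequences, $u_M \to u$ in $L^1_{\rm loc}$ and weakly-$*$ in $BV_{\rm loc}$, with $u\in L^\infty_{\rm loc}\cap BV_{\rm loc}$ and $u\ge v$ a.e.

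The key structural point is monotonicity in $M$: I would show $u_M$ is nondecreasing in $M$. The natural way is the comparison of Proposition~\ref{promono}-type arguments — for $M'>M$, the restriction of $u_{M'}$ to $B_M$ need not be admissible for the $B_M$ problem (boundary values differ), so instead I compare the pair $u_M, u_{M'}$ via $u_M\wedge u_{M'}$ on $B_M$ and $u_M\vee u_{M'}$ on the overlap: the function equal to $u_M\vee u_{M'}$ on $B_M$ and $u_{M'}$ on $B_{M'}\setminus B_M$ is admissible for the $B_{M'}$ problem (it is $\ge v$ and lies in $BV(B_{M'})$ since the two pieces agree on the "outer" trace appropriately — here one uses that $u_M\vee u_{M'} \ge u_{M'}$ so gluing does not create extra perimeter on $\partial B_M$... actually the cleanest claim is $u_{M'}\ge u_M$ a.e. on $B_M$, proved by noting $u_M\wedge u_{M'}$ is admissible on $B_M$ and $(u_{M'}\vee u_M$ on $B_M$, $u_{M'}$ elsewhere$)$ is admissible on $B_{M'}$, summing energies and invoking strict convexity of the fidelity term). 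Granting this, $u_M\uparrow u$ pointwise a.e., the convergence upgrades from $L^1_{\rm loc}$ to $L^p(B_R)$ for every $p<\infty$ by the uniform $L^\infty(B_R)$ bound and dominated convergence, which is exactly the asserted defining property; moreover this pins down $u$ uniquely (no subsequences needed), since any two functions satisfying $\lim_M\|u-S_{h,v}(f,B_M)\|_{L^p(B_R)}=0$ for all $R,p$ must agree a.e.

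Next I would prove the local minimality inequality. Fix $R$ and $\varphi\in BV(\R^n)$ with $\operatorname{supp}\varphi\subset B_R$ and $u+\varphi\ge v$ a.e. For $M>R$, I want to test the minimality of $u_M$ on $B_M$ against a competitor built from $u_M+\varphi$; the issue is that $u_M+\varphi$ may fail $u_M+\varphi\ge v$ where $u_M<u$. I fix this by using the competitor $w_M := u_M + (u+\varphi-u_M)^+ \cdot \chi_{\{?\}}$ — more simply, $w_M:=\max(u_M,\,u+\varphi)\wedge(u_M + \|\varphi\|_\infty+\ldots)$... the robust choice is $w_M := u_M \vee (u+\varphi)$ if $\varphi\ge 0$-type reductions fail, so instead: since $u_M\le u$ and $\varphi$ is fixed, set $\psi_M := (u+\varphi - u_M)$, which has support in... it does \emph{not} have compact support, but $\psi_M\to\varphi$ in $L^p(B_R)$ and $\psi_M\to 0$ a.e.\ off $B_R$ while $u_M+\psi_M=u+\varphi$. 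The honest route: truncate, writing $w_M$ equal to $u+\varphi$ on $B_R$ and to $u_M$ on $B_M\setminus B_R$, which is admissible ($w_M\ge v$ on $B_R$ since $u+\varphi\ge v$, and $w_M=u_M\ge v$ outside) and lies in $BV(B_M)$ with $\int_{B_M}|Dw_M|\le \int_{B_R}|D(u+\varphi)| + \int_{B_M\setminus B_R}|Du_M| + \HH$-jump on $\partial B_R$ controlled by the traces. Comparing energies of $u_M$ and $w_M$, then passing $M\to\infty$ using $u_M\to u$ in $L^2_{\rm loc}$, lower semicontinuity of total variation on $B_R$ for the left side, and convergence of the fidelity terms, gives the inequality — the jump term across $\partial B_R$ is the nuisance and is handled by choosing a slightly larger radius $R'\in(R,M)$ and a foliation/coarea argument (for a.e.\ $R'$ the traces of $u_M$ and $u$ on $\partial B_{R'}$ are comparable and the extra perimeter on $\partial B_{R'}$ tends to $0$ along a good sequence of radii).

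The main obstacle, as flagged above, is the gluing/boundary-trace bookkeeping: none of the natural competitors on $B_M$ are directly admissible because the obstacle constraint $\ge v$ and the interior behavior of the finite-domain minimizers interact with the free (Neumann-type) boundary on $\partial B_M$. The cleanest resolution is to prove the $M$-monotonicity first (so $u_M\le u$ everywhere, making $u+\varphi$-type perturbations of $u_M$ automatically obstacle-admissible after an upward truncation), and to absorb the spurious perimeter on intermediate spheres $\partial B_{R'}$ via a coarea/good-radius selection so it vanishes in the limit; uniqueness is then immediate from the $L^p_{\rm loc}$ characterization and the strict convexity of the functional on each $B_R$.
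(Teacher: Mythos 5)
Your proof hinges on the claim that $M\mapsto u_M:=S_{h,v}(f,B_M)$ is nondecreasing, and this claim is false in general. The problems on $B_M$ and $B_{M'}$ carry natural (Neumann--type) boundary conditions, and the data outside $B_M$ can push $u_{M'}$ either up \emph{or} down inside $B_M$: take $n=1$, $v\equiv-\infty$, $f=0$ on $(-1,1)$ and $f=-100$ outside; then $u_1\equiv 0$ on $(-1,1)$ while $u_2<0$ on part of $(-1,1)$. Your proposed proof of monotonicity also cannot be repaired: the competitor equal to $u_M\vee u_{M'}$ on $B_M$ and to $u_{M'}$ on $B_{M'}\setminus B_M$ creates a jump of size $(u_M-u_{M'})^+$ on $\partial B_M$, so the submodularity inequality $|D(u\wedge w)|+|D(u\vee w)|\le|Du|+|Dw|$ does not close the energy comparison. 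Since everything downstream rests on this (pointwise monotone convergence of the full sequence, the upgrade to $L^p$, uniqueness of the limit without subsequences, and the admissibility fix $u_M\le u$ used to test the local minimality), the argument collapses. A secondary but real gap: your uniform $L^\infty(B_R)$ bound on $\{u_M\}_{M\ge R+1}$ does not follow from Proposition~\ref{probound}, which only gives $\|u_M\|_{L^\infty(B_M)}\le\max(\|f\|_{L^\infty(B_M)},\|v^+\|_{L^\infty(B_M)})$ -- a bound that blows up with $M$ when $f$ is merely locally bounded (e.g.\ $f(x)=|x|$); localizing it requires precisely the kind of decay-of-influence estimate you have not proved. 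A telltale sign is that your scheme would yield convergence in $L^p(B_R)$ for \emph{every} $p<\infty$, whereas the theorem restricts to $p>n$.

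What is actually needed (and what the paper does) is a quantitative estimate showing that the boundary of $B_M$ has vanishing influence on $B_{M/2}$: for any two local minimizers $u=u_M$, $u'=u_{M'}$ one perturbs by $\mp t\psi(u-u')\varphi$ with $\psi\ge 0$ nondecreasing, $\psi(s)\le Cs^+$ (so that both perturbations stay above the obstacle $v$ without any ordering of $u$ and $u'$), sums the two minimality inequalities, and obtains
\[
\int (u-u')\psi(u-u')\varphi\,dx\ \le\ 2h\int\psi(u-u')|\nabla\varphi|\,dx .
\]
Choosing $\psi(s)=(s^+)^{p-1}$ and $\varphi=\varphi_0(|\cdot|/M)^p$ and applying H\"older's inequality gives $\|u_M-u_{M'}\|_{L^p(B_{M/2})}\le C(h,p)\,M^{n/p-1}$, which is a Cauchy estimate precisely when $p>n$. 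This single estimate simultaneously yields full-sequence convergence, the $L^p_{\rm loc}$ integrability of arbitrary local minimizers (by comparison with the bounded minimizer $S_{h,v}(f,B_M)$), and uniqueness of any function satisfying the stated local minimality property. You should replace the monotonicity step with an estimate of this type; the gluing/good-radius bookkeeping in your last paragraph then becomes unnecessary.
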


\begin{proof}
We shall show a bit more: for any $M>0$, let us denote by $u_M$
an arbitrary local minimizer of~\eqref{eqfob}, in the sense that
\begin{equation}\label{locmin}
\int_{B_M} |Du_M|+ \frac{1}{2h} \int|u_M-f|^2\,dx \le
\int_{B_M} |D(u_M+\varphi)|+ \frac{1}{2h} \int|u_M+\varphi-f|^2\,dx
\end{equation}
for any $\varphi\in BV(B_M)$ with compact support. We will show
that $(u_M)_{M\ge 2R}$ is a Cauchy sequence in $L^p(B_R)$, provided $p>n$.

To start, let us consider $\psi:\R\to \R_+$ a smooth, nondecreasing and
bounded function with $0\le \psi(s)\le Cs^+$ for any $s$. Let $M'>M>0$, 
and let $\varphi\in C_c^\infty(B_M;\R_+)$, which we extend by zero to $B_{M'}$.
We denote $u=u_M$, $u'=u_{M'}$. Let $t>0$: observe that
\[
\begin{array}{ll}
u'(x) + t\psi(u(x)-u'(x))\varphi(x)& \ge\  u'(x)\ \ge\ v(x)
\\
\\
u(x) - t\psi(u(x)-u'(x))\varphi(x)&  \ge\ u(x)-tC\sup\varphi\, (u(x)-u'(x))^+
\\
& \ge\  u(x)-(u(x)-u'(x))^+ 
\\ 
& =\ \min\{u(x),u'(x)\}\\ 
&\ge\ v(x)
\end{array}
\]
for almost every $x\in\R^n$, as soon as $t\le (C\sup\varphi)^{-1}$.

Hence, we deduce from~\eqref{locmin} that for $t$ small enough,
\begin{multline*}
\int_{B_M} |D(u-t\psi(u-u')\varphi)|+\frac{1}{2h}\int_{B_M}
|u-t\psi(u-u')\varphi-f|^2\,dx\\ \ge\ 
\int_{B_M} |Du|+\frac{1}{2h}\int_{B_M}|u-f|^2\,dx 
\end{multline*}
and
\begin{multline*}
\int_{B_M} |D(u'+t\psi(u-u')\varphi)|+\frac{1}{2h}\int_{B_M}
|u'+t\psi(u-u')\varphi-f|^2\,dx\\ \ge\ 
\int_{B_M} |Du'|+\frac{1}{2h}\int_{B_M}|u'-f|^2\,dx \,,
\end{multline*}
which we sum to obtain
\begin{multline*}
\frac{t}{h}\int_{B_M} (u-u')\psi(u-u')\varphi\,dx\ 
\le\ \frac{t^2}{h}\int_{B_M} (\psi(u-u')\varphi)^2\,dx\\
+\,\int_{B_M} |Du-t\psi'(u-u')(Du-Du')\varphi-t\psi(u-u')\nabla \varphi|
\\+|Du'+t\psi'(u-u')(Du-Du')\varphi+t\psi(u-u')\nabla \varphi|-|Du|-|Du'|\;.
\end{multline*}
For $\rho\le t\|\varphi\|_\infty\|\psi'\|_\infty\le 1$ and $t$ 
small enough, the integrand in the right-hand side has the form
\begin{multline*}
|p-\rho(p-p')-tq|+|p'+\rho(p-p')+tq|-|p|-|q|
\\ \le\, 2t|q| + (1-\rho)|p|+\rho|p'|+(1-\rho)|p'|+\rho|p|-|p|-|q|
\,=\,2t|q|\,
\end{multline*}
and we obtain
\[
\frac{t}{h}\int_{B_M} (u-u')\psi(u-u')\varphi\,dx
\ \le\ \frac{t^2}{h}\int_{B_M}(\psi(u-u')\varphi)^2\,dx
\,+\, 2t\int_{B_M} \psi(u-u')|\nabla\varphi|\,dx\,.
\]
Dividing by $t$ and letting $t\to 0$, we deduce
\begin{equation}\label{estimdiff}
\int_{B_M} (u-u')\psi(u-u')\varphi\,dx
\ \le\  2h\int_{B_M} \psi(u-u')|\nabla\varphi|\,dx\,.
\end{equation}
Consider now, for $p> 2$, the function $\psi(s)=(s^+)^{p-1}$:
we want to show that~\eqref{estimdiff} still holds. We approximate
$\psi$ with $\psi_k(s) = k\tanh(\psi(s)/k)$, for $k\ge 1$.
The functions $\psi_k$
satisfy the assumptions which allowed us to establish~\eqref{estimdiff},
so that it holds with $\psi$ replaced with $\psi_k$. Moreover,
$\lim_{k\to\infty} \psi_k(u-u')=\sup_{k\ge 1} \psi_k(u-u')=\psi(u-u')$,
and in the same way $\sup_{k\ge 1} (u-u')\psi_k(u-u')=(u-u')\psi(u-u')$.
Hence, the monotone convergence theorem shows that \eqref{estimdiff}
also holds, in the limit, for $\psi$, as claimed.

We can take $\varphi(x)=\varphi_0(|x|/M)^p$, for some
$\varphi_0\in C_c^\infty([0,1);\R_+)$ which is $1$ on $[0,1/2]$.
It follows from~\eqref{estimdiff} and H\"older's inequality that
\begin{multline*}
\int_{B_M}  \left[(u-u')^+\varphi_0(|x|/M)\right]^p\,dx
\\\le\, 
2h \int_{B_M}  \left[(u-u')^+\varphi_0(|x|/M)\right]^{p-1}
\frac{p}{M}\left|\varphi_0'(|x|/M)\right|\,dx
\\
\le\,2h
\left[\int_{B_M}\left[(u-u')^+\varphi_0(|x|/M)\right]^p
\right]^{1-\frac{1}{p}}
\left[\int_{B_M}\left(\frac{p}{M}\right)^p
\left|\varphi_0'(|x|/M)\right|^p
\right]^{\frac{1}{p}}\,.
\end{multline*}
Hence:
\begin{equation*}
\left\|(u-u')^+\varphi_0\left(\frac{|\cdot|}{M}\right)\right\|_{L^p(B_M)}
\ \le\ \frac{2 h p\omega_n^{1/p}}{M^{1-n/p}}\|\varphi'_0\|_\infty
\end{equation*}
with $\omega_n$ the volume of the unit ball. Exchanging the roles of
$u$ and $u'$ in the previous proof, we find that
\begin{equation}
\|u_M-u_{M'}\|_{L^p(B_{M/2})}\ \le\ \frac{2 h p\,\omega_n^{1/p}}{M^{1-n/p}}\|\varphi'_0\|_\infty\,.
\end{equation}
As in particular $u_M$ (or $u_{M'}$) could, in this calculation, 
have been chosen to be
the minimizer $S_{h,v}(f,B_M)$, which is bounded by Proposition~\ref{probound},
we obtain that $u_{M'}\in L^p(B_{M/2})$ (as well as $u_M$). Hence,
choosing $R>0$, we see that $(u_M)_{M\ge 2R}$ defines
a Cauchy sequence in $L^p(B_R)$, provided $p>n$. It follows that it
converges to some limit $u\in L^p(B_R)$. As $R$ is arbitrary, we build
in this way a function $u$ which clearly satisfies the thesis of the theorem.
\end{proof}

\begin{corollary}\label{Scompar}
Assume $f\ge f'$, $v\ge v'$, $h>0$, then $S_{h,v}(f)\ge S_{h,v'}(f')$.
\end{corollary}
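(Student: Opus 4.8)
The plan is to deduce the global comparison principle from the already-established local monotonicity (Proposition~\ref{promono}) by passing to the limit $M\to\infty$ in the approximating problems on balls $B_M$. The point is that $S_{h,v}(f)$ and $S_{h,v'}(f')$ are, by Theorem~\ref{teoglob}, the $L^p_{\rm loc}$-limits (for any fixed $p\in(n,\infty)$) of the genuine minimizers $S_{h,v}(f,B_M)$ and $S_{h,v'}(f',B_M)$ respectively, so a pointwise inequality that holds at every finite stage survives the limit.

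Concretely, I would first fix $M>0$ and apply Proposition~\ref{promono} with $B=B_M$, $f_1=f\restr_{B_M}$, $f_2=f'\restr_{B_M}$, $v_1=v\restr_{B_M}$, $v_2=v'\restr_{B_M}$ (all the hypotheses $f_1\ge f_2$, $v_1\ge v_2$ a.e.\ are inherited from the global ones), obtaining
\[
S_{h,v}(f,B_M)\ \ge\ S_{h,v'}(f',B_M)\qquad\text{a.e.\ in }B_M.
\]
Then I would fix an arbitrary $R>0$ and a $p>n$; by Theorem~\ref{teoglob}, $\|S_{h,v}(f,B_M)-S_{h,v}(f)\|_{L^p(B_R)}\to0$ and $\|S_{h,v'}(f',B_M)-S_{h,v'}(f')\|_{L^p(B_R)}\to0$ as $M\to\infty$. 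Along a subsequence $M_k\to\infty$ both convergences hold almost everywhere on $B_R$, and since the inequality above holds a.e.\ on $B_R$ for every $M_k\ge R$, passing to the limit gives $S_{h,v}(f)\ge S_{h,v'}(f')$ a.e.\ on $B_R$. Since $R$ was arbitrary, the inequality holds a.e.\ on $\R^n$, which is the claim.

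There is no real obstacle here: the only thing to be slightly careful about is that Proposition~\ref{promono} is stated for the minimizer on a bounded open set, and Theorem~\ref{teoglob} identifies the global object precisely as the limit of those bounded-domain minimizers, so the two results dovetail immediately. (One should note that the hypothesis $v^+\in L^\infty_{\rm loc}$, $f\in L^\infty_{\rm loc}$ of Theorem~\ref{teoglob} is implicitly assumed here, as otherwise $S_{h,v}(f)$ is not even defined; with it, Proposition~\ref{probound} guarantees the $S_{h,v}(f,B_M)$ are in $L^\infty$, hence in $L^p(B_R)$, which is what makes the Cauchy argument of Theorem~\ref{teoglob} applicable.) So the proof is essentially the one-line remark that monotonicity is preserved under a.e.\ limits, together with the explicit approximation furnished by Theorem~\ref{teoglob}.
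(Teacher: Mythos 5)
Your argument is correct and is exactly the paper's (one-line) proof, spelled out: apply Proposition~\ref{promono} on each ball $B_M$ and pass to the a.e.\ limit using the characterization of $S_{h,v}(f)$ as the $L^p_{\rm loc}$-limit of $S_{h,v}(f,B_M)$ from Theorem~\ref{teoglob}. No gaps; the extra care about subsequential a.e.\ convergence is exactly the right (routine) detail.
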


\begin{proof}
It follows from Proposition~\ref{promono} and the definition of $S_{h,v}(f)$.
\end{proof}

\begin{corollary}\label{corlip}
If $f,v$ are uniformly continuous on $\R^n$, with a modulus of
continuity $\omega(\cdot)$, 
then $S_{h,v}(f)$ is also uniformly
continuous with the same modulus of continuity.
\end{corollary}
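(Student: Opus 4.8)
The plan is to use two elementary covariance properties of the operator $S_{h,\cdot}(\cdot)$ — invariance under the addition of a constant and under a translation of the variable — and then to read off the conclusion from the comparison principle of Corollary~\ref{Scompar}.

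First I would record the covariance facts. On any fixed ball $B_M$ the problem~\eqref{eqfob} is plainly unchanged when $(f,v,u)$ is replaced by $(f+c,v+c,u+c)$ for a constant $c\in\R$, and it is transformed into the same problem posed on the translated ball $B_M+\tau$ when $(f(\cdot),v(\cdot),u(\cdot))$ is replaced by $(f(\cdot+\tau),v(\cdot+\tau),u(\cdot+\tau))$. Hence $S_{h,v+c}(f+c,B_M)=S_{h,v}(f,B_M)+c$, and $S_{h,v(\cdot+\tau)}(f(\cdot+\tau),B_M)$ is the minimizer of~\eqref{eqfob} on $B_M+\tau$ shifted back by $\tau$. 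Passing to the limit $M\to\infty$ in the first identity, and using that the convergence in Theorem~\ref{teoglob} is in $L^p_{\rm loc}$ (which is stable under a constant shift), gives
\[
S_{h,v+c}(f+c)\ =\ S_{h,v}(f)+c .
\]
For the translation, one has to check that solving on the off-centre balls $B_M+\tau$ still yields $S_{h,v}(f)$ in the limit. Since $B_{M-|\tau|}\subseteq B_M+\tau$ for $M>|\tau|$, the minimizer of~\eqref{eqfob} on $B_M+\tau$ is in particular a local minimizer of~\eqref{eqfob} on $B_{M-|\tau|}$ and is bounded by Proposition~\ref{probound}, so the very Cauchy estimate established in the proof of Theorem~\ref{teoglob} applies to it and forces it to converge to $S_{h,v}(f)$ in $L^p_{\rm loc}$; shifting back and using uniqueness of the limit,
\[
S_{h,v(\cdot+\tau)}(f(\cdot+\tau))\ =\ S_{h,v}(f)(\cdot+\tau)\qquad\text{for every }\tau\in\R^n .
\]

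With these identities the conclusion is immediate. Fix $\tau\in\R^n$: since $f$ and $v$ have modulus of continuity $\omega$, $f(\cdot+\tau)\le f+\omega(|\tau|)$ and $v(\cdot+\tau)\le v+\omega(|\tau|)$ a.e., so Corollary~\ref{Scompar} together with the two covariance identities gives, for a.e.\ $x$,
\[
S_{h,v}(f)(x+\tau)=S_{h,v(\cdot+\tau)}(f(\cdot+\tau))(x)\le S_{h,v+\omega(|\tau|)}(f+\omega(|\tau|))(x)=S_{h,v}(f)(x)+\omega(|\tau|).
\]
Replacing $\tau$ by $-\tau$ and $x$ by $x+\tau$ yields the reverse inequality, whence $|S_{h,v}(f)(x+\tau)-S_{h,v}(f)(x)|\le\omega(|\tau|)$ for every $\tau$ and a.e.\ $x$; after passing to the continuous representative (e.g.\ by mollification and Arzel\`a--Ascoli) this becomes an everywhere bound, so $S_{h,v}(f)$ has modulus of continuity $\omega$.

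The one genuinely delicate point is the translation covariance: because $S_{h,v}(f)$ is built by exhausting $\R^n$ with balls centred at the origin, one must make sure that the off-centre exhaustion by $B_M+\tau$ produces the same limit, and this is exactly what the uniformity in $M$ of the Cauchy estimate in Theorem~\ref{teoglob} guarantees. Everything else is a one-line consequence of monotonicity.
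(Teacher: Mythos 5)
Your argument is correct and is essentially the paper's own proof: both rest on the translation and constant-shift covariance of $S_{h,\cdot}(\cdot)$ combined with the comparison principle of Corollary~\ref{Scompar}. The only difference is that you spell out the verification of translation covariance for the off-centre exhaustion $B_M+\tau$, a point the paper leaves implicit.
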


\begin{proof}
It follows from the previous corollary. For $z\in \R^n$,
let $v'(x)\ :=\ v(x-z)-\omega(|z|)\le v(x)$ and
$f'(x)\ :=\ f(x-z)-\omega(|z|)\le f(x)$. Then,
$S_{h,v'}(f')= S_{h,v}(f)(\cdot - z)-\omega(|z|)\le S_{h,v}(f)$, which shows
the corollary.
\end{proof}

Observe that, if $f,v$ are uniformly continuous, then $S_{h,v} (f,B)$ satisfies the elliptic equation
\begin{equation}\label{eulob}
- \Div z + \frac{u-f}{h} = 0 
\qquad {\rm on\ }\{ x\in B:\ u(x)>v(x)\}.
\end{equation}
where the vector field $z$ satisfies 
$|z|=1$ and $z=Du/|Du|$ whenever $|Du|\ne 0$.

\begin{proposition}\label{proset}
Assume that $f(x)\to \infty$ as $|x|\to \infty$, and let $s\in\R$.
Then the set $\{S_{h,v}(f)<s\}$ is the 
minimal solution of the problem
\begin{equation}\label{varmin}
\min_{E\subset \{v<s\}} P(E) + \int_E \frac{f-s}{h}\,dx.
\end{equation}
Similarly, 
the set $\{S_{h,v}(f)\le s\}$ is the 
maximal solution of 
\begin{equation}\label{varmax}
\min_{E\subset \{v\le s\}} P(E) + \int_E \frac{f-s}{h}\,dx.
\end{equation}
\end{proposition}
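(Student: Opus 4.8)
Throughout write $u:=S_{h,v}(f)$. The plan is to reduce the strictly convex variational problem defining $u$ to the one-parameter family of geometric problems \eqref{varmin}--\eqref{varmax} by means of the coarea formula and a layer-cake decomposition, and then to identify each sublevel set of $u$ with the appropriate geometric minimizer. The hypothesis $f(x)\to\infty$ as $|x|\to\infty$ is used first: it makes the potential $(f-s)/h$ large and positive far away, so that minimizing sequences for \eqref{varmin}--\eqref{varmax} stay in a fixed ball, these problems have solutions of finite perimeter, and, by comparison from below with the unconstrained problem (Corollary~\ref{Scompar}), $u(x)\to\infty$ as well, so that every set $\{u<s\}$ is bounded. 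Hence all the sets that appear below are compactly contained in some ball $B_M$, on which $u$ is a local minimizer of \eqref{eqfob}; all the computations may then be carried out on such a fixed $B_M$, which is what makes the layer-cake manipulation legitimate although $f\notin L^2(\R^n)$.

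The decomposition is as follows: for two competitors $w_0,w_1\ge v$ agreeing outside a compact subset of $B_M$, the coarea formula $\int_{B_M}|Dw_i|=\int_\R P(\{w_i>t\},B_M)\,dt$ together with the pointwise identity $\tfrac1{2h}(w_i-f)^2=\tfrac1h\int_\R(\car{\{w_i>t\}}-\car{\{f>t\}})(t-f)\,dt$ and Fubini give, after the divergent contributions of $\car{\{f>t\}}$ cancel,
\begin{equation*}
\mathcal J(w_1)-\mathcal J(w_0)\;=\;\int_{\R}\big(\mathcal F_t(\{w_1>t\})-\mathcal F_t(\{w_0>t\})\big)\,dt,\qquad\mathcal F_t(E):=P(E)+\tfrac1h\int_E(t-f)\,dx,
\end{equation*}
where $\mathcal J$ denotes the functional in \eqref{eqfob} on $B_M$. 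Passing to complements replaces $\mathcal F_t(\{w>t\})$ by $\mathcal G_t(\{w\le t\}):=P(\{w\le t\})+\tfrac1h\int_{\{w\le t\}}(f-t)\,dx$ up to an additive constant, and the constraint $w\ge v$ becomes, level by level, $\{w\le t\}\subseteq\{v\le t\}$; thus \eqref{varmax} is exactly the level-$s$ problem attached to $\{w\le s\}$, and \eqref{varmin} its strict counterpart.

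Next I would study the level problems themselves. For fixed $t$, submodularity of the perimeter, additivity of $F\mapsto\int_F(f-t)$ and stability of $\{v\le t\}$ under $\cap$ and $\cup$ show that the solutions of the level-$t$ problem form a lattice, so that (with lower semicontinuity of $P$ and compactness of minimizing sequences in a fixed ball) there are a minimal solution $G_t^-$ and a maximal solution $G_t^+$; a quantitative form of submodularity — comparing $\mathcal G_t(F_1\cap F_2)+\mathcal G_{t'}(F_1\cup F_2)$ with $\mathcal G_t(F_1)+\mathcal G_{t'}(F_2)$ produces a surplus $-\tfrac{t'-t}{h}|F_1\setminus F_2|$ — forces, for $t<t'$, every level-$t$ solution to lie inside every level-$t'$ one, so $t\mapsto G_t^{\pm}$ is nondecreasing. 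Setting $u^*(x):=\inf\{t:x\in G_t^+\}$, one has by monotonicity $\{u^*\le s\}=\bigcap_{t>s}G_t^+$, which by lower semicontinuity of $P$, dominated convergence for the bulk term, and the inequality $\limsup_{t\downarrow s}(\text{value of \eqref{varmax} at level }t)\le(\text{value at level }s)$ (obtained by using an optimal set at level $s$, which stays admissible for $t>s$) is again a level-$s$ solution; being $\supseteq G_s^+$ it equals $G_s^+$. Moreover $u^*\ge v$ (as $x\in G_t^+$ forces $v(x)\le t$) and $u^*\in BV_{\rm loc}\cap L^\infty_{\rm loc}$, and plugging $u^*$ and any admissible competitor into the displayed identity, using that each $G_t^+$ minimizes $\mathcal G_t$ among subsets of $\{v\le t\}$, gives $\mathcal J(u^*)\le\mathcal J(w)$ for all admissible $w$; by uniqueness (Theorem~\ref{teoglob}) $u^*=u$, so $\{u\le s\}=G_s^+$ is the maximal solution of \eqref{varmax} for every $s$.

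It remains to treat $\{u<s\}$ and to see that these sublevel sets are themselves solutions. The containments come directly from the decomposition: given a solution $F$ of \eqref{varmax} at level $s$, the competitor $\tilde u:=u$ on $\R^n\setminus F$, $\tilde u:=u\wedge s$ on $F$ is admissible and equals $u$ off the bounded set $F\cap\{u>s\}$, and combining $\mathcal J(u)\le\mathcal J(\tilde u)$ with the bound $\mathcal F_t(\{u>t\}\setminus F)\le\mathcal F_t(\{u>t\})-\tfrac{t-s}{h}|F\cap\{u>t\}|$ for $t\ge s$ (submodularity plus minimality of $F$) forces $F\subseteq\{u\le s\}$ up to a null set; the mirror competitor ($\tilde u:=u\vee s$ off $E$, $\tilde u:=u$ on $E$) gives $\{u<s\}\subseteq E$ for every solution $E$ of \eqref{varmin} at level $s$. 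To upgrade these to the full statement — that $\{u<s\}$ is itself the minimal solution of \eqref{varmin} for every $s$ — one repeats the reconstruction of the previous paragraph with the minimal solutions of \eqref{varmin}, using $\{u<s\}=\bigcup_{t<s}\{u\le t\}$ and a semicontinuity argument as $t\uparrow s$. The main obstacle, and the real content, is precisely this passage from the variational characterization of the whole function $u$ to statements about its individual level sets — the lattice and nesting structure and the reconstruction being the technical heart — and in particular the careful matching, at the countably many levels where the values of \eqref{varmin}--\eqref{varmax} jump, of strict versus non-strict sublevel sets with minimal versus maximal geometric solutions; the recurring technical nuisance is that, since $f\notin L^2(\R^n)$, the layer-cake identity only has a meaning for differences of competitors localized in a fixed ball.
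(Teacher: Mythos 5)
Your argument is correct and is essentially the paper's own proof written out in full: the paper localizes to balls $B_M$, invokes \cite{BCCN} and \cite[Sec.~2.2.2]{ChFor} for exactly the coarea/layer-cake decomposition and submodularity-based level-set analysis that you reconstruct, and then uses the coercivity of $f$ to see that the sets $\{S_{h,v}(f,B_M)<s\}$ stabilize for large $M$ and coincide with $\{S_{h,v}(f)<s\}$. Your treatment of the localization (working with differences of competitors in a fixed ball, and using coercivity to bound the sublevel sets and the geometric minimizers) matches the paper's, so the only difference is that you supply the details the paper delegates to the cited references.
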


\begin{proof}
Let $M>0$ and consider the set $E^s_M=\{S_{h,v}(f,B_M)<s\}$.
Reasoning as in \cite{BCCN} (see also \cite[Sec. 2.2.2]{ChFor})
one can show that $E^s_M$ is the minimal solution of 
\[
\min_{E\subset B_M\cap \{v<s\}} P(E,B_M) + \int_E \frac{f-s}{h}\,dx.
\]
Since $f$ is coercive, the sets $E^s_M$ do not depend on $M$ for $M$ big enough, 
and coincide with the set $\{S_{h,v}(f)< s\}$,
so that the result follows letting $M\to +\infty$.

The second assertion regarding the set $\{S_{h,v}(f)\le s\}$ can be proved analogously.
\end{proof}

\section{Mean curvature flow with obstacles}\label{secobst}

Let us give a precise definition of the flow \eqref{formalevol}.
Given a set $E\subset\R^n$ we denote by 
\[
d_E(x) := {\rm dist}(x,E)-{\rm dist}(x,\R^n\setminus E)\qquad x\in\R^n
\]
the signed distance function from $E$, which is negative inside $E$ and positive outside.

\begin{definition}\label{defsubsuper}
Given a family of sets $E(t)$, $t\in [0,T]$, we set
$$d(x,t):= d_{E(t)}(x).$$
We say that $E(t)$ is a $C^{1,1}$ supersolution of \eqref{formalevol}
if there exists a bounded open set $U\subset\R^n$ such that $E(t)\subset\Om$ and
$\partial E(t)\subset U$ for all $t\in [0,T]$, 
\begin{equation}\label{regol}
\begin{aligned}
d&\in {\rm Lip}(U\times [0,T])
\\
|\nabla^2 d|&\in L^\infty(U\times [0,T])
\end{aligned}\end{equation}
and 
\begin{equation}\label{eqdistsup}
\frac{\partial d}{\partial t} \ge \Delta d + O(d)\qquad {\rm a.e.\ in\ }U\times [0,T].
\end{equation}
We say that $E(t)$ is a $C^{1,1}$ subsolution of~\eqref{formalevol}
if \eqref{eqdistsup} is replaced by
\begin{equation}\label{eqdistsub}
\frac{\partial d}{\partial t} \le \Delta d + O(d)\qquad {\rm a.e.\ in\ }
\left(U\times [0,T]\right)\cap \{d>d_\Om\},
\end{equation}
and we say that 
$E(t)$ is a $C^{1,1}$ solution of~\eqref{formalevol}
if it is both a supersolution and a subsolution. 
\end{definition}

We now fix an open set $\Om\subset\R^n$ (representing the complement of the obstacle) and a compact set $E\subseteq\Om$.
The case when $E^c$ is compact can be treated with minor modifications.

Since $E$ is compact, without loss of generality we can assume that $\Om$ is bounded. Indeed, as it will be clear from the sequel,
replacing $\Om$ with $\Om\cap B_M$ will not affect our construction, provided $B_M\supset E$.

\begin{definition}\label{defscheme}
Let $h>0$ and set 
\begin{equation}\label{Th}
T_h E := \{ S_{h,d_\Om} (d_E)< 0\}.
\end{equation}
Given $t>0$, we let
$$
E_h(t):=T^{[t/h]}_hE 
$$
be the discretized evolution 
of $E$ defined by the scheme $T_h$. 
\end{definition}

Notice that $T_h E$ is an open subset of $\Om$ and, by Proposition \ref{proset},  $T_h E$ is the minimal solution
of the geometric problem
\begin{equation}\label{probset}
\min_{F\subseteq \Om} P(F) + \frac{1}{h}\int_{F}d_E\,dx
\end{equation}
or equivalently
\[
\min_{F\subseteq \Om} P(F) + \frac{1}{h}\int_{F\triangle E}|d_E|\,dx.
\]
When $\Om=\R^n$ this corresponds to the implicit scheme 
introduced in \cite{ATW,LuckhausSturz} for the mean curvature flow. Here, 
from \eqref{eulob} it also follows that $T_h E$ satisfies
\begin{equation}\label{eukappa}
\kappa +\frac{d_E}{h}=0
\qquad {\rm on}\quad \partial T_h E\setminus \partial\Om.
\end{equation}

\begin{remark}\label{remalm}\rm
Observe that from Proposition \ref{promono} it follows 
$$
E_1\subset E_2 \Rightarrow 
T_h E_1\subset T_h E_2.
$$
Moreover, by Corollary \ref{Scompar} we have
$S_{h,d_\Om} (d_E)\ge S_{h,-\infty} (d_E)$ which implies
$T_h E \subseteq \widetilde T_h E:=\{ S_{h,-\infty} (d_E)<0\}$.
Notice that $\widetilde T_h E$ is the scheme introduced in \cite{ATW,LuckhausSturz} 
for the (unconstrained) mean curvature flow.
\end{remark}

{}From the general regularity theory for minimizers of the perimeter with 
a smooth obstacle \cite{Mir,Caf} we have the following result.

\begin{proposition}\label{propreg}
Let $\partial \Om$ be of class $C^{1,1}$, $E\subseteq\Om$ and $h>0$.
Then there exists a closed set $\Sigma\subset\partial T_hE\cap \Omega$ such that 
$\mathcal H^s(\Sigma)=0$ for all $s>n-8$,  $\partial T_hE\setminus \Sigma$ is of class $C^{1,1}$,
and $(\partial T_hE\cap \Omega)\setminus \Sigma$ is $C^{2,\alpha}$ for
any $\alpha<1$.
\end{proposition}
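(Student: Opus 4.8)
The plan is to reduce the statement to known regularity results for perimeter minimizers with obstacles, applying them separately in the interior of $\Om$ and near $\partial\Om$. Recall from Proposition~\ref{proset} that $T_hE=\{S_{h,d_\Om}(d_E)<0\}$ is a \emph{minimal} solution of the geometric problem $\min_{F\subseteq\Om}P(F)+\frac1h\int_F d_E\,dx$; in particular it is a set of finite perimeter minimizing a perimeter functional perturbed by a bounded volume term, subject to the unilateral constraint $F\subseteq\Om$. So the task splits naturally according to whether a boundary point lies in the open set $\Om$ or on $\partial\Om$.

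First I would treat the part of $\partial T_hE$ lying in $\Om$. There the constraint $F\subseteq\Om$ is inactive, so $T_hE$ is a local quasi-minimizer (indeed an almost-minimizer, or $\Lambda$-minimizer with $\Lambda\sim\|d_E\|_{L^\infty}/h$) of the perimeter. By the classical De~Giorgi--Federer--Almgren regularity theory for such minimizers (as presented e.g.\ in \cite{AFP}), the reduced boundary $\partial^*T_hE\cap\Om$ is a $C^{1,\alpha}$ hypersurface for every $\alpha<1$, and the singular set $\Sigma_1:=(\partial T_hE\cap\Om)\setminus\partial^*T_hE$ satisfies $\mathcal H^s(\Sigma_1)=0$ for all $s>n-8$ (it is empty if $n\le 7$). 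Moreover, since $T_hE$ satisfies the Euler--Lagrange equation \eqref{eukappa}, namely $\kappa=-d_E/h$ with $d_E$ Lipschitz on the relevant neighbourhood $U$, a bootstrap using Schauder estimates for the (prescribed mean curvature) minimal surface equation upgrades the regular part to $C^{2,\alpha}$ for every $\alpha<1$. This gives the claim $(\partial T_hE\cap\Om)\setminus\Sigma$ is $C^{2,\alpha}$.

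Next I would handle a neighbourhood of $\partial\Om$, where the obstacle is active and the interior theory does not directly apply. This is where I invoke the regularity theory for the \emph{obstacle problem} for sets of finite perimeter, as developed in \cite{Mir,Caf}: when the obstacle $\partial\Om$ is of class $C^{1,1}$, a set minimizing perimeter (plus a bounded volume perturbation) under the constraint $F\subseteq\Om$ has boundary of class $C^{1,1}$ near $\partial\Om$ — in fact wherever the constraint is touched, $\partial T_hE$ inherits the $C^{1,1}$ regularity of $\partial\Om$ (the obstacle problem generically does not give more than $C^{1,1}$ at contact points, which is exactly why the stated regularity drops to $C^{1,1}$ there), and away from the contact set one is back in the interior situation. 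Combining: outside a closed singular set $\Sigma\subset\partial T_hE\cap\Om$ with $\mathcal H^s(\Sigma)=0$ for $s>n-8$, the whole boundary $\partial T_hE$ is $C^{1,1}$, with the improvement to $C^{2,\alpha}$ precisely on the part that lies in the open set $\Om$.

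The main obstacle is the interface between the two regimes — ensuring that the $C^{1,1}$ regularity from the obstacle problem near $\partial\Om$ matches continuously with the $C^{2,\alpha}$ interior regularity, and in particular that the singular set $\Sigma$ can be taken inside $\Om$ (i.e.\ that no singularities are created on $\partial\Om$ itself). This is handled by the quoted results of \cite{Mir,Caf}, which show that the contact set $\partial T_hE\cap\partial\Om$ carries no perimeter singularities because $\partial T_hE$ locally coincides with the smooth hypersurface $\partial\Om$ there; hence the singular set stays in the free-boundary region $\partial T_hE\cap\Om$ and obeys the codimension-$8$ bound from the classical theory. Since all of these are established facts in the cited literature, for the purposes of this paper the proposition is stated as a consequence of \cite{Mir,Caf} together with standard interior regularity, and no further argument is needed.
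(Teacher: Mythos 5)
Your proposal is correct and follows essentially the same route as the paper, which offers no proof at all beyond the sentence preceding the statement: the proposition is presented as a direct consequence of the regularity theory for perimeter minimizers with a $C^{1,1}$ obstacle in \cite{Mir,Caf}, combined with standard interior regularity (De~Giorgi--Federer--Almgren partial regularity plus the bootstrap via the Euler--Lagrange equation \eqref{eukappa}). Your write-up simply makes explicit the interior/contact-set decomposition that those citations are meant to cover.
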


\begin{proposition}\label{propres}
Let $\partial \Om$ be of class $C^{1,1}$. Then 
there exists $C(\Om)>0$ such that
\[
T_h E=\{S_{h,-\infty}(d_E+C h \chi^{}_{\Om^c})<0\}
\]
for all $C\ge C(\Om)$. In particular $T_hE$ is a minimizer of the prescribed
curvature problem
\begin{equation}\label{prescribedforcing}
\min_F P(F)+C|F\setminus\Om|+\frac{1}{h}\int_F d_E\,dx.
\end{equation}
\end{proposition}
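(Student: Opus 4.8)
The plan is to show that for $C$ large enough, the unconstrained obstacle problem with a large penalization $Ch\chi_{\Om^c}$ on $\Om^c$ produces exactly $T_hE$. First I would invoke Proposition~\ref{proset}: since $d_E+Ch\chi_{\Om^c}$ is bounded (hence not coercive as stated, but one can work in a large ball $B_M\supset\Om$ and extend by a coercive function outside, or simply replace $f$ by $f+\eta|x|^2$ and let $\eta\to 0$, as in the reduction to bounded $\Om$ noted before Definition~\ref{defscheme}), the sublevel set $\{S_{h,-\infty}(d_E+Ch\chi_{\Om^c})<0\}$ is the minimal solution of
\[
\min_{F\subseteq\R^n} P(F)+\frac1h\int_F \big(d_E+Ch\chi_{\Om^c}\big)\,dx
\ =\ \min_F P(F)+C|F\setminus\Om|+\frac1h\int_F d_E\,dx,
\]
which is \eqref{prescribedforcing}. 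So it suffices to prove that, for $C\ge C(\Om)$, every minimizer $F$ of \eqref{prescribedforcing} satisfies $F\subseteq\Om$ up to a null set, and that in that case the problem reduces to \eqref{probset} whose minimal solution is $T_hE$ by definition.

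The heart of the argument is the containment $F\subseteq\Om$, and this is where I would use the $C^{1,1}$ regularity of $\partial\Om$, i.e.\ the interior/exterior $R$-ball condition recalled in Section~2. The idea is a comparison/cut-and-paste estimate: given a minimizer $F$ of \eqref{prescribedforcing}, compare its energy with that of $F\cap\Om$. The perimeter change is $P(F\cap\Om)-P(F)\le \HH(\partial^*F\cap\Om^c) - \HH(\partial^*\Om\cap F^{(1)})$ roughly, and one must bound the ``bad'' term $\HH(\partial^*\Om\cap F)$ coming from the new piece of boundary lying on $\partial\Om$ by the volume gained, $|F\setminus\Om|$. Here the exterior ball condition for $\Om$ (equivalently the interior ball condition for $\Om^c$, radius $R$) gives a relative isoperimetric-type inequality: for any set $G\subseteq\Om^c$ one has $\HH(\partial\Om\cap \overline G)\le c(n)\,|G|/R$ when $|G|$ is small, or more robustly a density estimate forcing $P(F;\Om^c)\gtrsim |F\cap\Om^c|/R$. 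The $d_E/h$ term is harmless since $d_E$ is bounded, absorbed by enlarging $C$. Choosing $C>C(\Om):=c(n)/R + \|d_E\|_\infty/(h\,\cdot)$... — actually the cleaner route is: the energy drop from replacing $F$ by $F\cap\Om$ is at least $C|F\setminus\Om| - c(n)|F\setminus\Om|/R - \text{(perimeter lost on }\partial\Om) \ge (C-c(n)/R)|F\setminus\Om|$, so any $C>c(n)/R$ forces $|F\setminus\Om|=0$. I would also note we may assume $F\subset B_M$ by Proposition~\ref{probound}/the boundedness reductions, so no issues at infinity.

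Once $F\subseteq\Om$ is established, the penalization term $C|F\setminus\Om|$ vanishes on all admissible competitors that matter, and \eqref{prescribedforcing} coincides with \eqref{probset}; minimality is preserved under this restriction of the admissible class together with the already-proven fact that minimizers stay in $\Om$, so the set of minimizers of \eqref{prescribedforcing} equals the set of minimizers of \eqref{probset}. In particular the minimal such set is $T_hE$ (Definition~\ref{defscheme} and the remark after it), and since $\{S_{h,-\infty}(d_E+Ch\chi_{\Om^c})<0\}$ is precisely that minimal minimizer by Proposition~\ref{proset}, we get the claimed identity $T_hE=\{S_{h,-\infty}(d_E+Ch\chi_{\Om^c})<0\}$. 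The ``in particular'' about \eqref{prescribedforcing} is then immediate.

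The main obstacle I anticipate is making the comparison estimate $P(F\cap\Om)-P(F)\lesssim |F\setminus\Om|/R$ fully rigorous at the level of sets of finite perimeter: one needs the right decomposition of the reduced boundary of $F\cap\Om$ in terms of $\partial^*F$ and $\partial^*\Om$, and one must genuinely exploit the exterior $R$-ball condition on $\partial\Om$ (not just $C^{1,1}$) to control the measure of $\partial\Om$ meeting $F$ by the volume $|F\setminus\Om|$ — a uniform relative isoperimetric inequality in the collar $\{0<d_\Om<\delta\}$, which holds with constant depending only on $n$ and $R$. This is a standard but slightly delicate measure-theoretic argument; everything else (coercivity reduction, passing from $S_{h,-\infty}$ to the geometric problem, identifying minimal solutions) follows directly from the results already proven above.
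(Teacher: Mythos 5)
The skeleton of your argument matches the paper's: identify $\{S_{h,-\infty}(d_E+Ch\chi^{}_{\Om^c})<0\}$ as the minimal solution of \eqref{prescribedforcing} via Proposition~\ref{proset}, show that for $C$ large this set is contained in $\Om$, and conclude that the two geometric problems coincide. (Incidentally, your worry about coercivity is unfounded: $E$ is compact, so $d_E(x)\to+\infty$ as $|x|\to\infty$ and Proposition~\ref{proset} applies directly; no modification of $f$ is needed.) The problem lies in how you establish the containment.

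Your key estimate, $\HH(\partial\Om\cap\overline G)\le c(n)|G|/R$ for $G\subseteq\Om^c$, is false: take $G=\{0<d_\Om<\e\}$, a thin collar, for which $\HH(\partial\Om\cap\overline G)=\HH(\partial\Om)$ while $|G|\approx\e\,\HH(\partial\Om)\to 0$. Consequently the asserted bound $P(F\cap\Om)-P(F)\lesssim|F\setminus\Om|/R$ cannot hold for arbitrary competitors, and the one-line energy comparison $(C-c(n)/R)|F\setminus\Om|\le 0$ collapses. The correct cut-and-paste estimate must control the \emph{new} boundary created on $\partial\Om$ by the \emph{old} perimeter of $F$ outside $\Om$ \emph{plus} a volume term, i.e.\ something of the form
\[
\HH\bigl(\partial^*\Om\cap F^{(1)}\bigr)\ \le\ \HH\bigl(\partial^*F\setminus\overline{\Om}\bigr)\,+\,c(n,R)\,|F\setminus\Om|,
\]
which one proves by integrating $\Div X$ over $F\setminus\Om$ for a vector field $X$ extending the normal $\nu_\Om$ with $|X|\le 1$ and $\Div X\ge -c$ in a collar (this is where the exterior $R$-ball condition, i.e.\ $|\Delta d_\Om|\le c$ near $\partial\Om$, actually enters), together with the decomposition of $\partial^*(F\setminus\Om)$. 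With that corrected lemma your scheme does close, but as written the central inequality is wrong, not merely ``slightly delicate''.

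The paper avoids this geometric measure theory entirely by arguing at the level of the function $u_M$ minimizing \eqref{uMforcing}: for $x\notin\Om$ the exterior $R$-ball condition provides $B_R(x')\ni x$ disjoint from $\Om$, the data dominates $hC\chi^{}_{B_R(x')}$ there, and Meyer's explicit solution of the total variation problem for the characteristic function of a ball gives $u_M\ge (C-n/R)h>0$ on $B_R(x')$ once $C>n/R$. Hence $\{u<0\}\subseteq\Om$ pointwise, with $C(\Om)=n/R$. This comparison with an explicit radial solution is both shorter and yields the constant directly; you may want to adopt it, or else supply the divergence-theorem trace estimate above in place of the false isoperimetric claim.
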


\begin{proof}
We recall that $S_{h,-\infty}(d_E+C h \chi^{}_{\Om^c})$ is the limit,
 as $M\to\infty$, of the minimizer $u_M$ of the variational problem
\begin{equation}\label{uMforcing}
\min_{u\in BV(B_M)}\int_{B_M}|Du|+\frac{1}{2h}\int_{B_M}(u-d_E-C h \chi^{}_{\Om^c})^2dx
\end{equation}
{}From Proposition \ref{proset} it follows that $T_hE$ is the
minimal solution to \eqref{probset},
while $$\bar F=\{S_{h,-\infty}(d_E+C h \chi^{}_{\Om^c})<0\}$$ is the minimal solution
to~\eqref{prescribedforcing}. If $\bar F\subset\Om$, then $|\bar F\setminus
\Om|=0$ and both $\bar F$ and $T_h E$ solve the same problem, and
they must therefore coincide.

In order to show that $\bar F\subset\Om$, it is enough to find
a positive constant  $\tilde{C}$ such that
for all $x\not\in\Om$, $u_M\ge \tilde{C}>0$ for $M$ large enough.

By assumption, $\Om$ satisfies an exterior $R$-ball condition, for some $R>0$, that is,
for any $x\not\in \Om$, there is a ball
$B_R(x')$ with $x\in B_R(x')$ and $B_R(x')\cap\Om=\emptyset$. 
If $M$ is large enough, we also have $B_R(x')\in B_{M/2}$. Since
$E\subset\Om$, $d_E+hC\chi_{\Om^c}\ge hC\chi_{B_R(x')}$, so that
$u_M$ is larger than the minimizer $u'$ of
\[
\min_{u\in BV(B_M)} \int_{B_M}|Du| +\frac{1}{2h}\int_{B_M} (u-hC\chi_{B_R(x')})^2
\,dx
\]
If $C>n/R$, then it is well known that for $M$ large enough,
$u'\ge (C-n/R)h$ a.e.~in $\chi_{B_R(x')}$~\cite{Meyer}.
The thesis then follows.
\end{proof}


\subsection{Existence of weak solutions}

As a consequence of Proposition \ref{propres}, when 
$\partial \Om$ is of class $C^{1,1}$ the scheme enters the framework
considered in \cite{CN-ATW}. In that case, we can also show
existence of weak solutions in the sense of~\cite{ATW,LuckhausSturz}. 
We observe that the results in~\cite[p.~226]{AmbrosioMM} 
still apply and we can deduce the (approximate)
$1/(n+1)$--H\"older-continuity in time of the discrete flow starting from an initial set $E_0$.
As a consequence, following \cite[Th.~3.3]{AmbrosioMM},
we can pass to the limit, up to a subsequence, and deduce the existence
of a flow $E(t)$, which is H\"older-continuous in time in $L^1(\Om)$.

\begin{theorem}[Existence of H\"older-continuous weak solutions]
Let $\partial \Om$ be of class $C^{1,1}$, let $E\subset\Om$ be a compact set of finite perimeter and such that $|\partial E|=0$. 
Let $E_h(t)$ be the discretized evolutions starting from $E$, defined in Definition \ref {defscheme}. Then there exist a constant 
$C=C(n,E,\Om)>0$, a sequence $h_i\to 0$ and a map $E(t)\to \mathcal P(\Om)$ such that 
\begin{itemize}
\item $E(0)=E$;
\item $E(t)$ is a compact set of finite perimeter for all $t\ge 0$;
\item $\lim_i \vert E_{h_i}(t)\Delta E(t)\vert=0$ for all $t\ge 0$;
\item $\vert E(t)\Delta E(s)\vert\le C |s-t|^\frac{1}{n+1}$ for all $s,t\ge 0$, with $|s-t|\le 1$.
\end{itemize}
\end{theorem}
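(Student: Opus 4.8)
The plan is to follow the by-now standard strategy of De Giorgi's minimizing movements, as presented in \cite[Th.~3.3]{AmbrosioMM}, adapted to the constrained scheme $T_h$. The key point, already isolated in Proposition \ref{propres}, is that when $\partial\Om$ is $C^{1,1}$ the map $T_h$ coincides with an \emph{unconstrained} minimizing movement scheme with a bounded forcing term $C\chi_{\Om^c}$; hence the scheme falls into the abstract framework of \cite{CN-ATW,AmbrosioMM}, and the only real work is to check that the hypotheses there are met and to record the resulting estimates.

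First I would fix $h>0$ and look at one step $F=T_hE$, which by Proposition \ref{proset} (and Proposition \ref{propres}) minimizes $P(F)+C|F\setminus\Om|+\frac1h\int_F d_E\,dx$ among $F\subseteq\R^n$. Comparing $F=T_hE$ with the competitor $E$ itself gives the monotonicity-type estimate
\[
P(T_hE)+\frac1h\int_{T_hE\triangle E}|d_E|\,dx\ \le\ P(E),
\]
since $|d_E|$ is exactly the distance from $E$ and the forcing term is nonnegative on $\Om^c$ while $E\subset\Om$. This is the discrete dissipation inequality. Iterating over $k$ time steps and using $d_E\ge -\dist(\cdot,\R^n\setminus E)$ one controls, in the usual way (Cauchy--Schwarz on the telescoping sum), the quantity $\sum_k |E_h((k+1)h)\triangle E_h(kh)|^2/h$ by $P(E)$, and from there one gets the \emph{approximate} $1/(n+1)$--Hölder continuity in time: $|E_h(t)\triangle E_h(s)|\le C(n,E,\Om)(|t-s|^{1/(n+1)}+h^{1/(n+1)})$ for $|t-s|\le 1$, exactly as on \cite[p.~226]{AmbrosioMM}. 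The extra forcing term $C\chi_{\Om^c}$ only contributes a term controlled by $C|\Om|$, so the constant depends on $n,E,\Om$ only. One also gets a uniform perimeter bound $P(E_h(t))\le P(E)$ and, since $E\subset\Om$ is compact with $\Om$ bounded, $E_h(t)\subset\Om$ stays in a fixed bounded set.

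Next I would extract the limit. The uniform bounds $P(E_h(t))\le P(E)$ and $E_h(t)\subset\Om$ give, for each fixed $t$, compactness in $L^1$ of the indicator functions $\chi_{E_h(t)}$; a diagonal argument over a countable dense set of times $t$, combined with the equicontinuity estimate just established, lets one pass to a subsequence $h_i\to0$ so that $\chi_{E_{h_i}(t)}\to\chi_{E(t)}$ in $L^1(\Om)$ for \emph{every} $t\ge0$, with the limiting map $t\mapsto E(t)$ satisfying $|E(t)\triangle E(s)|\le C|t-s|^{1/(n+1)}$ (the $h^{1/(n+1)}$ term disappears in the limit). Lower semicontinuity of the perimeter gives that each $E(t)$ has finite perimeter; $E(0)=E$ is immediate since $T_h^0E=E$; and each $E(t)$ is compact because it is contained in the fixed compact set $\ov\Om$ (up to the usual choice of the closed representative). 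The hypothesis $|\partial E|=0$ is used to guarantee $|E_h(0)\triangle E|\to0$ at the initial time, i.e.\ consistency of the initialization.

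The main obstacle — or rather, the step requiring the most care — is the diagonal extraction producing convergence \emph{for all} $t$ rather than a.e.\ $t$: one must combine the per-time $L^1$-compactness with the uniform-in-$h$ Hölder estimate to upgrade pointwise-in-time convergence along a subsequence from the dense set to all $t$, and check that the modulus of continuity passes to the limit. This is routine given the estimates above (it is the content of \cite[Th.~3.3]{AmbrosioMM}), but it is where the Hölder bound is genuinely used rather than just stated. Everything else is a direct translation of \cite{ATW,LuckhausSturz,AmbrosioMM,CN-ATW} through Proposition \ref{propres}.
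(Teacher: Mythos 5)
Your proposal follows exactly the route the paper takes (which it only sketches in the paragraph preceding the theorem): use Proposition \ref{propres} to recast $T_h$ as an unconstrained minimizing-movement step with bounded forcing $C\chi_{\Om^c}$, derive the discrete dissipation inequality by comparison with $E$, obtain the approximate $1/(n+1)$--H\"older estimate as in \cite[p.~226]{AmbrosioMM}, and pass to the limit by the diagonal argument of \cite[Th.~3.3]{AmbrosioMM}. The proposal is correct and in fact supplies more detail than the paper does.
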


\subsection{Consistency of the scheme}\label{secst}

The main result of this section (Theorem~\ref{thcon}) is showing 
that the implicit scheme is consistent with regular evolutions,
according to the following definition.

\begin{definition}\label{defconsist}
The scheme $T_h$ is consistent if and only if 
\begin{enumerate}
\item If $E(\cdot)$ is a supersolution (see Def.~\ref{defsubsuper})
in an interval $[t_1,t_2]$,
then for any $t\in [t_1,t_2]$,
any Hausdorff limit of $T_h^n E(t_1)$, $n\to\infty$,
$h\to 0$, $nh\to t-t_1$, contains $E(t)$.
\item If $E(\cdot)$ is a subsolution, this inclusion is reversed.
\end{enumerate}
\end{definition}

\begin{theorem}\label{thcon}
The scheme $T_h$ is consistent.
\end{theorem}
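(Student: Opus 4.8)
The plan is to prove consistency by a barrier/comparison argument, using the monotonicity of the scheme (Remark \ref{remalm}) together with the fact that $T_hE$ satisfies the Euler--Lagrange relation $\kappa + d_E/h = 0$ on $\partial T_hE\setminus\partial\Om$ (see \eqref{eukappa}), and that $T_hE\subset\Om$. First I would treat the supersolution case; the subsolution case is entirely symmetric, with the only difference being that the obstacle constraint $F\subset\Om$ can only help (it enlarges the evolved set relative to the unconstrained scheme $\widetilde T_h$), which is precisely why the inclusion is one-sided on $\{d>d_\Om\}$ in \eqref{eqdistsub}.

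The key step is to construct, from the distance function $d(x,t)=d_{E(t)}(x)$ of the given smooth supersolution, a family of slightly shrunk sets that the discrete scheme cannot escape. Concretely, fix the open neighborhood $U$ of $\bigcup_t\partial E(t)$ from Definition \ref{defsubsuper}. On $U$, set $d^\e(x,t) := d(x,t) + \e(t)$ for a small function $\e(t)>0$ to be chosen, and let $E^\e(t) := \{x : d^\e(x,t) < 0\}$, a family of sets shrinking inside $E(t)$. The inequality \eqref{eqdistsup}, $\partial_t d \ge \Delta d + O(d)$, combined with $|\nabla d|=1$ near $\partial E(t)$ and the $L^\infty$ bound on $\nabla^2 d$, shows that on the region where $|d|$ is small, $d^\e$ is a supersolution of the heat-type equation with a strict margin coming from $\dot\e$ and from the curvature term; one then checks that one step of the scheme, $T_h$, applied to $E^\e(t)$ is contained in $E^\e(t+h)$ up to an error $o(h)$ absorbed by choosing $\e(\cdot)$ appropriately (e.g. $\e$ solving $\dot\e = -c\e - C$ or a similar ODE, kept positive on a short interval, then iterated). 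Here one uses \eqref{eukappa}: on $\partial T_h E^\e(t)\cap\Om$ the curvature of the evolved set equals $-d_{E^\e(t)}/h$, and comparing this with the curvature of the barrier $\partial E^\e(t+h)$ via the $C^{1,1}$ regularity of $d$ gives the inclusion $T_h E^\e(t)\subset E^\e(t+h)$; away from $\Om$, i.e. on $\partial\Om$, the constraint $T_hE^\e(t)\subset\Om$ forces the inclusion trivially since the barrier sets are compactly contained in $\Om$ on $U$ near their boundary. Iterating $n$ times with $nh\to t-t_1$ and using monotonicity of $T_h$ (Remark \ref{remalm}) yields $T_h^n E(t_1)\supset T_h^n E^\e(t_1)\supset E^\e(t)$, and letting $\e\to0$ gives that every Hausdorff limit contains $E(t)$, which is the first assertion of Definition \ref{defconsist}.

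For the subsolution case I would instead enlarge: put $d^\e(x,t) := d(x,t) - \e(t)$, giving sets $E^\e(t)\supset E(t)$, and use \eqref{eqdistsub} on $\{d > d_\Om\}$, i.e. away from the obstacle. The point is that on the part of $\partial E(t)$ touching the obstacle $\partial\Om$ the scheme automatically keeps $T_hE\subset\Om$, so there the evolved set is smaller than the free evolution and the desired (reversed) inclusion is not endangered; on the part strictly inside $\Om$, the subsolution inequality \eqref{eqdistsub} plays the role that \eqref{eqdistsup} played above, combined again with \eqref{eukappa}, to show $T_h E^\e(t)\subset E^\e(t+h)$ with the enlarged barriers. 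One must also compare with the unconstrained scheme $\widetilde T_h$ of Remark \ref{remalm}, for which this type of consistency is already classical (as in \cite{ATW,LuckhausSturz,AmbrosioMM}), so that only the behavior near $\partial\Om$ is genuinely new, and there the constraint only shrinks the set, which is the favorable direction.

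The main obstacle I anticipate is making the single-step comparison $T_h E^\e(t)\subset E^\e(t+h)$ rigorous despite the low regularity: $T_hE$ is only $C^{1,1}$ (indeed only $C^{1,1}$ up to a small singular set $\Sigma$ by Proposition \ref{propreg}), so the Euler--Lagrange curvature identity \eqref{eukappa} holds only $\mathcal H^{n-1}$-a.e. on $\partial T_hE\cap\Om$, and one cannot simply evaluate curvatures pointwise at a contact point of the two boundaries. The way around this is to argue at the level of the variational problem \eqref{probset} rather than pointwise: one shows directly that $E^\e(t+h)$ is a subsolution of the minimization problem defining $T_hE^\e(t)$ in the sense that $P(E^\e(t+h)) + \tfrac1h\int_{E^\e(t+h)} d_{E^\e(t)}\,dx \le P(F) + \tfrac1h\int_F d_{E^\e(t)}\,dx$ for all $F\supset E^\e(t+h)$ with $F\subset\Om$, which by minimality and the inclusion-monotonicity structure of \eqref{varmin}--\eqref{varmax} forces $T_hE^\e(t)\subset E^\e(t+h)$; this ``calibration'' estimate is exactly where the differential inequalities \eqref{eqdistsup}/\eqref{eqdistsub} and the $L^\infty$ bound on $\nabla^2 d$ get used, in integrated form, and it sidesteps the singular set entirely. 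A secondary technical point is bookkeeping the $O(d)$ and $o(h)$ errors uniformly on the compact set $U\times[t_1,t_2]$ so that a single choice of $\e(\cdot)$ works along the whole discrete trajectory; this is routine given the regularity assumptions \eqref{regol}.
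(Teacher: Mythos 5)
Your overall architecture --- perturb the signed distance of the regular evolution by a small, linearly growing shift to get strict barriers $E^\e(t)$, establish a one-step inclusion for $T_h$, iterate, and conclude by monotonicity --- is indeed the paper's strategy (which takes $d_\e=d-\e-4c_\rho\e t$ and then invokes \cite[Th.~3.3]{CN-ATW}). However, the two points you dismiss as technical are precisely where the work lies, and the ideas needed to carry them out are missing. First, the regularity obstruction is not where you locate it: the singular set of $\partial T_hE$ plays no role, whereas the genuine difficulty --- stated explicitly in the paper --- is that the one-step comparison (whether phrased as a calibration for \eqref{probset} or as the consistency theorem of \cite{CN-ATW}) requires the \emph{barrier} to be $C^2$ in space, while Definition \ref{defsubsuper} only provides $\nabla^2d\in L^\infty$. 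Claiming that the Hessian bound is used ``in integrated form'' does not fix this: the paper must mollify, setting $d^\eta_\e=\varphi_\eta*d_\e$, and then prove the non-obvious estimate $|D^2d^\eta_\e\,\nabla d^\eta_\e|\le c\eta$ to check that mollification preserves the strict subflow inequality up to admissible errors; this occupies two of the four steps of the proof and has no counterpart in your plan.

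Second, the obstacle in the subsolution case is not handled by the observation that ``the constraint only shrinks the set.'' Your enlarged barriers $E^\e(t)\supset E(t)$ are \emph{not} subflows of the unconstrained curvature flow near the contact set $\{d=d_\Om\}$: there $\partial_t d=0$ a.e.\ while $\Delta d=\Delta d_\Om$ may have the wrong sign, so comparison with $\widetilde T_h$ fails exactly where the obstacle matters, and the inclusion $T_hE^\e(t)\subseteq E^\e(t+h)$ must still be proved for the constrained minimizer near $\partial\Om$. The paper's device is Proposition \ref{propres}: rewrite $T_h$ as the unconstrained scheme with discontinuous forcing $\bar C\chi_{\Om^c}$, and build barriers that are strict subflows of the \emph{forced} motion, the forcing term (via the smooth interpolation $g_\e$ and inequality \eqref{ineqout}) compensating $\Delta d_\Om$ at the contact set. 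Some equivalent quantitative mechanism must appear in any complete proof; your variational ``calibration'' sketch would have to build it in. A minor but real slip: in the supersolution argument the one-step inclusion you need is $T_hE^\e(t)\supseteq E^\e(t+h)$, not $\subseteq$, otherwise the chain $T_h^nE(t_1)\supseteq T_h^nE^\e(t_1)\supseteq E^\e(t)$ you conclude with does not follow.
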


\begin{proof}
The proof consists in building, arbitrarily close to $\partial E(t)$,
strict super and subsolutions of class $C^2$,
of the curvature flow with forcing term $C\chi_{\Omega^c}$, for $C$ large
enough.
Then, the consistency result in \cite[Th. 3.3]{CN-ATW} applies.\smallskip

\noindent{\it Step 1.}
Let $E$ be a subsolution on $[t_1,t_2]$ in the sense of
Definition~\ref{defsubsuper}, let $U\subset\R^n$ be the 
neighborhood associated to $\partial E(t)$ (given by
Definition~\ref{defsubsuper}). Without loss of generality
we can assume $t_1=0$.

Observe that there exists $\rho>0$ such that
$\{|d(\cdot,t)|\leq \rho\}\subset U$ for all $t\in
[0,t_2]$, and the sets $\partial\Om$, $\partial \{d(\cdot,t)\le s\}$, $|s|\le \rho$,
satisfy the interior and exterior $\rho$-ball condition for all times
(in particular $\partial E(t)$ satisfies the condition with radius $2\rho$).

Let $c_\rho\ge (n-1)/\rho^2$, and for $\e>0$ small, let
$$
d_\e (x,t)=d(x,t)-\e - 4 c_\rho\e t \qquad t\in [0,t_2].
$$
Observe that for $\e$ small enough,
$\{|d_\e(\cdot,t)|\le \rho/2\}\subset \{|d(\cdot,t)|\le\rho\}$ for all $t$.
The constant $c_\rho$ is precisely chosen so that in this set, the curvature
of two level surfaces $\{d(\cdot,t)=s\}$ and $\{d(\cdot,t)=s'\}$ at points along
the same normal vector $\nabla d(\cdot,t)$ differ by at most $c_\rho|s-s'|$.

We have, for a.e.~$t\in (0,t_2)$ and $x\in \{|d(\cdot,t)|\le \rho\}\subset U$,
\[
\frac{\partial d_\e}{\partial t}(x,t)\ =\ \frac{\partial d}{\partial t}(\Pi_{\partial E(x,t)}(x),t)
-4 c_\rho\e \,,
\]
thus:
\begin{itemize}
\item If $\Pi_{\partial E(x,t)}(x)\in \Om$, then (by  Definition~\ref{defsubsuper})
\[
\frac{\partial d_\e}{\partial t}(x,t)\ \le \ \Delta d_\e(x,t)-4 c_\rho\e + c_\rho|d|
\ \le\ \Delta d_\e(x,t)+ c_\rho|d_\e| + c_\rho(-4 \e + \e(1+ 4 c_\rho t))
\]
so that if $t\le\bar t= \min(t_2, 1/(2c_\rho))$ and $|d_\e|\le \e/2$,
\begin{equation}\label{ineqin}
\frac{\partial d_\e}{\partial t}(x,t)
\ \le\ \Delta d_\e(x,t)\,-\,c_\rho\frac{\e}{2}.
\end{equation}
\item While if $\Pi_{\partial E(x,t)}(x)\in \partial \Om$, then
$d=d_\Om$ and  almost surely $\partial d/\partial t=0$,
so that $\partial d_\e/\partial t = - 4 c_\rho\e$. On the other hand, there is
a constant $\bar C$ large enough
(of order $1/\rho$, and admissible for Proposition~\ref{propres}) such that
$|\Delta d_\e|\le \bar C$ a.e.~in $\{|d(\cdot,t)|<\rho\}$, and we deduce
\begin{equation}\label{ineqout}
- 4 c_\rho\e \ =\ 
\frac{\partial d_\e}{\partial t}(x,t)\ \le\ \Delta d_\e(x,t)+\bar C - 4 c_\rho\e.
\end{equation}
Moreover, if $d_\e\ge -\e/2$, we have that $d_\Om=d\ge 4c_\rho\e t+\e/2$.
\end{itemize}

Consider a function $g_\e$ which is $\bar C$ in $\{d_\Om\ge \e/2\}$,
$0$ in $\Om$, and smoothly decreasing from $\bar C$ to $0$ as $d_\Om$ decreases
from $\e/2$ to $0$: we deduce from~\eqref{ineqin} and~\eqref{ineqout} that
\[
\frac{\partial d_\e}{\partial t}\ \le\ \Delta d_\e\,+\, g_\e \,-\,c_\rho\frac{\e}{2}
\]
a.e.~in $\{(x,t)\,:\, |d_\e(x,t)|\le \e/2\,, t\in (0,\bar t)\}$.
We have built a strict subflow, as close as we want from $\partial E(t)$,
for  $t\in [0,\bar t]$. The fact that $\bar t$ could be less than $t_2$ is
not an issue, as we will see in the end of the next step.
On the other hand, the consistency
result in~\cite{CN-ATW} requires that $d$ is at least $C^2$ in space, which is
not the case here (and the proof does not extend to $C^{1,1}$ regularity).
For this, we need an additional smoothing of the surface,
which we perform in a second step.\smallskip

\noindent{\it Step 2.}
Now consider a spatial mollifier $\varphi_\eta(x)=\eta^{-n}\varphi(x/\eta)$, with $\eta<<\e$.
For all time let $d^\eta_\e=\varphi_\eta*d_\e$, which is still Lipschitz in $t$ and
now, smooth in $x$. If $\eta$ is small enough, and since $g_\e$ is continuous, we have
\[
\frac{\partial d^\eta_\e}{\partial t}\ \le\ \Delta d^\eta_\e\,+\, g_\e \,-\,c_\rho\frac{\e}{4}
\]
for a.e.~$x,t$ with $|d_\e(x,t)|\le \e/2-\eta$. We can rewrite this equation as
a curvature motion equation with some error term, as follows:
\begin{equation}\label{mcfwitherror}
\frac{\partial d^\eta_\e}{\partial t}
\ \le\  |\nabla d^\eta_\e|\left(\Div \frac{\nabla d^\eta_\e}{|\nabla d^\eta_\e|}
\,+\, g_\e\right)  \,-\,c_\rho\frac{\e}{4} 
\,+\, g_\e(1-|\nabla d^\eta_\e|) \,+\, \frac{(D^2d^\eta_\e\,\nabla d^\eta_\e)\cdot
\nabla d^\eta_\e}{|\nabla d^\eta_\e|^2}\,.
\end{equation}
Now, we have that
\begin{equation}\label{error0}
1\ \ge |\nabla d^\eta_\e|\ge 1-c\eta
\end{equation}
almost everywhere, 
for some constant $c>0$, of order $1/\rho$. Hence, if $\eta$ is small enough, we have
\begin{equation}\label{error1}
g_\e(1-|\nabla d^\eta_\e|)\le c_\rho\e/16.
\end{equation}

We claim that the following estimates holds: there
exists a constant $c>0$ (of order $1/\rho^2$) such that
\begin{equation}\label{error2}
|D^2 d^\eta_\e \,\nabla d^\eta_\e|\ \le\ c \eta\,.
\end{equation}
This will be shown later on (see \textit{Step 3}).
Using \eqref{error0} and \eqref{error2}, we find that
\[
\frac{(D^2d^\eta_\e\,\nabla d^\eta_\e)\cdot
\nabla d^\eta_\e}{|\nabla d^\eta_\e|^2}\ \le\ c_\rho\e/16
\]
if $\eta$ is small enough. Thus \eqref{mcfwitherror} becomes, using \eqref{error1},
\begin{equation}\label{strictsub}
\frac{\partial d^\eta_\e}{\partial t}
\ \le\  |\nabla d^\eta_\e|\left(\Div \frac{\nabla d^\eta_\e}{|\nabla d^\eta_\e|}
\,+\, g_\e\right)  \,-\,c_\rho\frac{\e}{8} \,.
\end{equation}

Since $|D^2 d_\e|\le 1/\rho$ for a.e.~$t$ and $x$ with $|d_\e(x,t)|\le \e/2$, this
is also true for $|D^2 d^\eta_\e|$
(for $|d_\e(x,t)|\le \e/2-\eta$), and using~\eqref{error0}
we can easily deduce that the boundaries of the level sets $E_\e(t)= \{d^\eta_\e(\cdot,t)\le 0\}$
have an
interior and an exterior ball condition with radius $\rho/2$. Together with~\eqref{strictsub},
and using $g_\e\le \bar C\chi_{\Om^c}$, we find that $E_\e(t)$, $0\le t\le \bar t$, is
a strict subflow for the motion with normal speed
 $V=-\kappa- \bar C\chi_{\Om^c}$, and
\cite[Th.~3.3]{CN-ATW} holds. We deduce that there exists $h_0>0$ such
that if $h<h_0$, $\overline{T}_h(E_\e(t)) \subseteq E_\e(t+h)$ for any $t\in [0,\bar t-h]$,
where $\overline{T}_h$ is the evolution scheme defined by
\[
\overline{T}_h E \ =\ \left\{ S_{h,-\infty} (d_E+\bar C h\chi_{\Om^c})\ < \ 0\right\}
\]
for any bounded set $E$. (It corresponds to the time-discretization of the mean
curvature flow with discontinuous forcing term $-C\chi_{\Om^c}$.)
Recall that
if $E\subset \Om$, Proposition~\ref{propres} shows that $\overline{T}_h E=T_h E\subset \Om$.
In particular, for the subflow $E(\cdot)$ considered here, he have
$T^n_h(E(0))=\overline{T}^n_h E(0)$, for all $n$ and $h>0$.
By induction, it follows that as long as $nh\le \bar t$,
\[
T^n_hE(0) \,=\,\overline{T}^n_h E(0)\,\subseteq\, E_\e(nh),
\]
hence $T^{\lfloor t/h \rfloor}_h E(0)$ is in a $3\e$-neighborhood of $E(t)$. Since $\bar t$ only depends
on $\rho>0$ (the regularity of the subflow $E(\cdot)$), we can split $[0,t_2]$ into
a finite number of intervals of size at most $\bar t$ and reproduce this construction
on each interval, making sure that the $\e$ parameter of each interval is less
than one third of the $\e$ of the next interval.

We deduce that for any $\delta>0$, if $h>0$ is small enough,
 then $T^n_hE(0) \subset \{d_{E(nh)}\le \delta\}$,
for $0\le nh \le t_2$. This shows the consistency of $T_h$
with subflows, assuming~\eqref{error2} holds.\smallskip

\noindent{\it Step 3: Proof of estimate~\eqref{error2}.}
Recall that since $d_\e$ is a distance function, $|\nabla d_\e|=1$
almost everywhere. Now, let us compute, for $\eta>0$ small and $x,y\in\{d(\cdot,t)\le
\e/2-\eta\}$:
\begin{multline}\label{eqlem1}
|\nabla d^\eta_\e(x,t)|^2-|\nabla d^\eta_\e(y,t)|^2
\ =\ \left(\nabla d^\eta_\e(x,t)-\nabla d^\eta_\e(y,t)\right)
\cdot \left(\nabla d^\eta_\e(x,t)+\nabla d^\eta_\e(y,t)\right)
\\
=\ \int_{B_\eta}\int_{B_\eta}
\left(\nabla d_\e(x-z,t)-\nabla d_\e(y-z,t)\right)
\cdot \left(\nabla d_\e(x-z',t)+\nabla d_\e(y-z',t)\right)\varphi_\eta(z)\varphi_\eta(z')\,dz\,dz'\,.
\end{multline}

As $|D^2 d_\e|\le 1/\rho$, $\nabla d_\e(\cdot,t)$ is $1/\rho$-Lipschitz,
using $|\nabla d_\e(x-z,t)|^2-|\nabla d_\e(y-z,t)|^2=0$ it follows
\begin{multline*}
\left(\nabla d_\e(x-z,t)-\nabla d_\e(y-z,t)\right)
\cdot \left(\nabla d_\e(x-z',t)+\nabla d_\e(y-z',t)\right)
\\ \le\ |\nabla d_\e(x-z,t)-\nabla d_\e(y-z,t)|\,\frac{2}{\rho}|z-z'|
\ \le\ \frac{2}{\rho^2} |x-y||z-z'|
\end{multline*}
and it follows from \eqref{eqlem1} that
\[
|\nabla d^\eta_\e(x,t)|^2-|\nabla d^\eta_\e(y,t)|^2
\ \le\ \frac{4}{\rho^2}|x-y|\eta\,.
\]
We deduce (letting $y\to x$) that
\[
2|D^2 d^\eta_\e(x,t)\nabla d^\eta_\e(x,t)|\ \le\ \frac{4}{\rho^2}\eta\,,
\]
which is estimate~\eqref{error2}.
\smallskip

\noindent{\it Step 4.} Consistency with superflows: the proof is almost identical
(reversing the signs and inequalities), but simpler for superflows. Indeed,
all the sets we now consider stay in $\Om$ and we do not need to take into
account the constraint or the forcing term $\bar C\chi_{\Om^c}$. 

\end{proof}


We can define a generalized flow as limit of the 
scheme $T_h$ as $h\to 0$.
Given an initial set $E\subseteq\Om$, for all $t\ge 0$ we let 
\begin{equation}\label{eh}
E_h(t) = T_h^{[t/h]}E \qquad {\rm and}\qquad  
E_h =\bigcup_{t\ge 0}E_h(t)\times \{t\}\subset\R^n\times [0,+\infty).
\end{equation}
Then there exists a sequence $(h_k)_{k\geq 1}$ such that
both $E_{h_k}$ and $\R^n\times [0,+\infty)\setminus E_{h_k}={}^c E_{h_k}$ 
converge in the Hausdorff distance (locally in time) 
to $E^*$ and ${}^c E_*$ respectively.

{}From Corollary \ref{Scompar} and Theorem \ref{thcon} we obtain a
comparison and uniqueness result for solutions of \eqref{formalevol}.

\begin{corollary}\label{corcomp}
Let $E_1(t)$ and $E_2(t)$ be respectively a sub- and a supersolution of \eqref{formalevol} for $t\in [0,T]$,
in the sense of Definition \ref{defsubsuper}. Then, if $E_1(0)\subseteq E_2(0)$, it follows that
$E_1(t)\subseteq E_2(t)$ for all $t\in [0,T]$.
In particular, if $\partial E$ is compact and of class $C^{1,1}$, there exists at most one solution $E(t)$ starting from $E$.
Moreover, by Remark \ref{remalm}, $E(t)$ is contained in the solution to the (unconstrained) mean curvature flow starting from $E$.
\end{corollary}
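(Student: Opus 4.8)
The strategy is to deduce the comparison principle for $C^{1,1}$ sub- and supersolutions directly from the consistency of the scheme $T_h$ (Theorem \ref{thcon}) together with the monotonicity of $T_h$ (Remark \ref{remalm}). First I would observe that, by Definition \ref{defconsist} applied to the subsolution $E_1(\cdot)$, any Hausdorff limit of $T_h^n E_1(0)$ with $nh \to t$ is contained in $E_1(t)$; and by the same definition applied to the supersolution $E_2(\cdot)$, any such limit of $T_h^n E_2(0)$ contains $E_2(t)$. (Strictly speaking the scheme $T_h$ used in Definition \ref{defscheme} involves the obstacle $\Om$; for the subsolution one works on $\{d > d_\Om\}$, but since both evolutions we compare stay inside $\Om$ by hypothesis and by Proposition \ref{propres} the constrained and the forced schemes agree there, this causes no difficulty.) Since $E_1(0) \subseteq E_2(0)$, the monotonicity $T_h E \subseteq T_h E'$ for $E \subseteq E'$ gives $T_h^n E_1(0) \subseteq T_h^n E_2(0)$ for every $n$ and every $h>0$. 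Passing to a subsequence along which both sides converge in the Hausdorff sense, the inclusion is preserved in the limit, so $E_1(t) \subseteq (\text{limit of } T_h^n E_1(0)) \subseteq (\text{limit of } T_h^n E_2(0)) \subseteq E_2(t)$ for every $t \in [0,T]$. This proves the comparison statement.

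For the uniqueness assertion, suppose $E(t)$ and $E'(t)$ are both solutions starting from the same set $E$ with $\partial E$ compact and of class $C^{1,1}$. A solution is both a sub- and a supersolution, so applying the comparison statement twice — once with $E_1 = E$, $E_2 = E'$ and once with $E_1 = E'$, $E_2 = E$, using $E(0) = E'(0) = E$ — yields $E(t) \subseteq E'(t)$ and $E'(t) \subseteq E(t)$, hence $E(t) = E'(t)$ for all $t \in [0,T]$. Finally, the last sentence follows from Remark \ref{remalm}: for each $h$ we have $T_h^n E(0) \subseteq \widetilde T_h^n E(0)$, where $\widetilde T_h$ is the unconstrained scheme, and passing to the limit along a common subsequence shows that the solution $E(t)$ is contained in (any Hausdorff limit of) the discrete unconstrained flow; since the latter is consistent with the smooth mean curvature flow (again by \cite{CN-ATW}, the $\Om = \R^n$ case of Theorem \ref{thcon}), $E(t)$ is contained in the classical mean curvature evolution of $E$ as long as the latter is regular.

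The main technical point — the one requiring care rather than the formal argument above — is the interplay between Hausdorff convergence of the discrete flows and the set inclusions: one must ensure that the subsequences $(h_k)$ extracted for $E_1(0)$ and for $E_2(0)$ can be taken to be the same, so that the inclusion $T_{h_k}^{n_k} E_1(0) \subseteq T_{h_k}^{n_k} E_2(0)$ passes to the limit, and one must check that the "contains/is contained in" clauses of Definition \ref{defconsist} are compatible in the sense that a limit of the larger flow indeed sits above $E_2(t)$ while a limit of the smaller flow sits below $E_1(t)$ along the \emph{same} choice of times $n_k h_k \to t$. Both are routine given that Hausdorff limits of nested sequences are nested and that the consistency statement is uniform in the choice of limiting subsequence, but they are the place where the argument must be written out carefully.
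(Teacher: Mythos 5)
Your overall strategy --- monotonicity of the scheme (Corollary~\ref{Scompar}/Remark~\ref{remalm}) combined with the consistency of Theorem~\ref{thcon}, then passing the inclusion $T_h^nE_1(0)\subseteq T_h^nE_2(0)$ to the Hausdorff limit --- is exactly the argument the paper intends (it offers no more than the citation of these two results). However, your key chain of inclusions does not follow from what you yourself correctly quote one sentence earlier. Definition~\ref{defconsist} gives, for the \emph{subsolution} $E_1$, that any Hausdorff limit of $T_h^nE_1(0)$ is \emph{contained in} $E_1(t)$, and for the \emph{supersolution} $E_2$, that any Hausdorff limit of $T_h^nE_2(0)$ \emph{contains} $E_2(t)$. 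You then write the chain $E_1(t)\subseteq \lim T_h^nE_1(0)\subseteq \lim T_h^nE_2(0)\subseteq E_2(t)$, whose first and last inclusions are the reverses of these two facts. With the inclusions in the directions actually provided, the three pieces ($\lim T_h^nE_1(0)\subseteq E_1(t)$, $\lim T_h^nE_1(0)\subseteq\lim T_h^nE_2(0)$, $E_2(t)\subseteq\lim T_h^nE_2(0)$) cannot be concatenated to compare $E_1(t)$ with $E_2(t)$ at all, so the comparison step is broken as written.

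The chain closes only if the roles are interchanged: if $E_1$ is the \emph{super}solution and $E_2$ the \emph{sub}solution, then consistency gives $E_1(t)\subseteq\lim T_h^nE_1(0)$ and $\lim T_h^nE_2(0)\subseteq E_2(t)$, and together with monotonicity one obtains $E_1(t)\subseteq E_2(t)$. This is also the only direction compatible with the differential inequalities of Definition~\ref{defsubsuper} (a supersolution has larger inward normal velocity, hence shrinks faster; a stationary ball is a subsolution while the shrinking ball is a solution, so ``sub contained in super'' is not preserved), and it is the form in which the comparison is actually invoked in the proof of Theorem~\ref{thshort} (``a supersolution with $\widetilde E(t_1)\subseteq E(t_1)$ satisfies $\widetilde E(t)\subseteq E(t)$''). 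In short, the corollary's labelling of $E_1,E_2$ is inconsistent with Definition~\ref{defconsist} as stated, and your proof inherits that inconsistency rather than resolving it: you must either swap which of $E_1,E_2$ is the sub/supersolution or justify the two reversed inclusions, and at present you do neither. The uniqueness and unconstrained-flow parts are unaffected by this issue (a solution is both a sub- and a supersolution, and $T_hE\subseteq\widetilde T_hE$ passes to the limit as you say), and your remarks about extracting a common subsequence and preserving inclusions under Hausdorff convergence are the right technical caveats.
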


\section{Short time existence and uniqueness in
dimension two}\label{sec2D}

In this section we assume $n=2$ and $\partial\Om$ of class $C^{1,1}$.
In the bidimensional case, the mean curvature is the same as
the total curvature of the boundary $\partial E$. Hence, any estimate
on the mean curvature yields a global estimate on the regularity
of $E$. This will be the key of our construction, for showing the
existence of regular ($C^{1,1}$) solutions to the mean curvature flow
with obstacles.
In higher dimension, this is not true anymore, and showing
the existence of such solutions remains an open problem.


The following result follows as in \cite[Lemma 7]{BCCN}.

\begin{lemma}\label{lemcha}
Let $h>0$ and let $E\subseteq\Om$ with $\partial E$ of class $C^{1,1}$. 
Let $\delta_E$ be the maximum $\delta>0$ such that both 
$\partial E$ and $\partial\Om$ satisfy the $\delta$-ball condition,
and let $u=S_{h,d_\Om}(d_E)$.
Then, for all $\delta'\in (0,\delta_E)$ we have 
\begin{equation}\label{dlip}
|u- d_E| \le\frac{h}{\delta_E-\delta'} \qquad {\rm in\ }\{|d_E|\le\delta'\}
\end{equation}
for all $h<(\delta_E-\delta')^2/3$.
\end{lemma}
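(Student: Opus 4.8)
The plan is to compare $u=S_{h,d_\Om}(d_E)$ with suitable translates of the distance function $d_E$ itself, exploiting the monotonicity of the operator $S_{h,\cdot}(\cdot)$ (Corollary~\ref{Scompar}) together with the fact that, on a tubular neighborhood of $\partial E$, the distance function $d_E$ is $C^{1,1}$ with $|\nabla^2 d_E|\le 1/(\delta_E-s)$ on the level set $\{|d_E|=s\}$, so that its Laplacian (hence the ``curvature'' term $\Div z$ in~\eqref{eulob}) is bounded in terms of $\delta_E$. The key observation is that the constant $\delta_E/\rho$-type bound on $\Delta d_E$ makes $d_E\pm (\text{const}\cdot h)$ a sub/supersolution of the obstacle problem~\eqref{eqfob} on the tube $\{|d_E|\le\delta'\}$, from which the two-sided bound~\eqref{dlip} follows by the comparison principle, exactly as in \cite[Lemma 7]{BCCN}.

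First I would fix $\delta'\in(0,\delta_E)$ and work inside the open tube $N:=\{|d_E|<\delta'\}$. On $N$ the function $d_E$ is of class $C^{1,1}$, $|\nabla d_E|=1$, and a standard computation with the ball condition gives $|\Delta d_E|\le (n-1)/(\delta_E-\delta')$ a.e.\ in $N$ (each principal curvature of a level set $\{d_E=s\}$, $|s|\le\delta'$, is bounded by $1/(\delta_E-s)\le 1/(\delta_E-\delta')$). Set $\Lambda:=(n-1)/(\delta_E-\delta')$ and consider the competitor $w:=d_E+\Lambda h$. Since the Euler--Lagrange equation for the minimizer reads $-\Div z+(u-d_E)/h=0$ on $\{u>d_\Om\}$ with $|z|\le 1$, one checks that $w$ is a supersolution: $-\Div(\nabla d_E/|\nabla d_E|)+(w-d_E)/h=-\Delta d_E+\Lambda\ge 0$. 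The obstacle constraint $w\ge d_\Om$ has to be verified where relevant, but this is automatic near $\partial E$ because $d_E\ge d_\Om$ always (as $E\subseteq\Om$) and $\Lambda h>0$. By the comparison principle for the obstacle problem (a localized version of Proposition~\ref{promono}, valid on $N$ once we control the behavior on $\partial N$), we conclude $u\le d_E+\Lambda h$ on $N$; the symmetric argument with $w':=d_E-\Lambda h$, which is a subsolution, gives $u\ge d_E-\Lambda h$, provided $w'$ stays admissible, i.e.\ $d_E-\Lambda h\ge d_\Om$ — this is where we need $d_E\ge\delta'$ away from $\partial E$ on the relevant portion, or rather it is automatic on the part of $N$ lying inside $\Om$ as soon as $h$ is small, and on the part touching $\partial\Om$ one uses that there $d_E=d_\Om$ and the obstacle is active. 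Choosing the radius factor so that $\Lambda\le 1/(\delta_E-\delta')$ (absorbing $n-1$ into the two-dimensional setting $n=2$, where $\Lambda=1/(\delta_E-\delta')$ exactly) yields precisely~\eqref{dlip}.

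The smallness condition $h<(\delta_E-\delta')^2/3$ enters in two places, and reconciling them is the main technical point. First, one needs $\Lambda h<\delta'$ (more precisely, small enough) so that the barriers $d_E\pm\Lambda h$, which are only defined and $C^{1,1}$ on the tube $N$, can be used as legitimate competitors: their zero (and $d_\Om$-) level sets must remain inside $N$ where the curvature estimate holds, and the comparison on $\partial N$ must go the right way. Second, to make the comparison argument localize cleanly one compares on the slightly larger tube $\{|d_E|\le\delta'\}$ and checks that on its boundary $\{|d_E|=\delta'\}$ the inequality $|u-d_E|\le h/(\delta_E-\delta')$ is already implied — typically by iterating the estimate on a chain of nested tubes, which is exactly the scheme of \cite[Lemma 7]{BCCN}; the cubic threshold $(\delta_E-\delta')^2/3$ is what guarantees this iteration closes. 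I expect the genuinely delicate step to be handling the obstacle near $\partial\Om$: there $d_E$ need not coincide with $d_\Om$ globally, the minimizer $u$ may be pinned to the obstacle, and one must argue that the one-sided barrier still works because on $\{u=d_\Om\}\cap N$ one has $u=d_\Om\le d_E\le d_E+\Lambda h$ trivially for the upper bound, while for the lower bound $u\ge d_\Om\ge d_E-\Lambda h$ holds only where $d_E-d_\Om\le\Lambda h$, which is a neighborhood of $\partial\Om\cap\partial E$ controlled again by the joint $\delta_E$-ball condition on $\partial E$ and $\partial\Om$. Everything else is a routine adaptation of the free-boundary-free argument of \cite{BCCN} to the constrained setting, using Corollary~\ref{Scompar} in place of the unconstrained comparison.
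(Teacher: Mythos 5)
Your heuristic is the right one (the principal curvatures of the level sets of $d_E$ in the tube $\{|d_E|\le\delta'\}$ are bounded by $1/(\delta_E-\delta')$, which should translate into a displacement of order $h/(\delta_E-\delta')$), but the argument you propose to convert it into a proof has a genuine gap at its core: the comparison principle you invoke does not exist in the form you need. Proposition~\ref{promono} and Corollary~\ref{Scompar} are \emph{global} statements: they compare minimizers when data and obstacles are ordered \emph{everywhere}. Your barriers $d_E\pm\Lambda h$ are super/subsolutions only inside the tube $N=\{|d_E|<\delta'\}$ --- outside it $d_E$ carries singular curvature of the wrong sign along the exterior ridge, so $d_E+\Lambda h$ is not a global supersolution --- and a comparison restricted to $N$ would require an a priori bound on $u-d_E$ along $\partial N$, which is precisely an instance of the estimate being proved. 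Your suggestion to close this by ``iterating on a chain of nested tubes'' does not remove the circularity (the outermost tube still has uncontrolled boundary data), and it is not what \cite[Lemma~7]{BCCN} actually does. (Also, the threshold $(\delta_E-\delta')^2/3$ is quadratic, not cubic.)

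The proof the paper refers to replaces the local barriers by \emph{globally} ordered ones: the distance functions of tangent interior and exterior balls of radius $\delta_E$, for which the resolvent $S_{h,-\infty}$ is an explicit radial function. Concretely, for $x_0$ with $|d_E(x_0)|\le\delta'$ let $B\subseteq E\subseteq\Om$ be the interior $\delta_E$-ball tangent at the projection of $x_0$ onto $\partial E$; then $d_B\ge d_E$ and $d_B\ge d_\Om$ everywhere, so Corollary~\ref{Scompar} gives $u\le S_{h,d_B}(d_B)=S_{h,-\infty}(d_B)$ (the unconstrained minimizer for a ball already lies above $d_B$, so the obstacle is inactive), and the explicit radial computation yields $S_{h,-\infty}(d_B)(x_0)\le d_B(x_0)+h/(\delta_E-\delta')=d_E(x_0)+h/(\delta_E-\delta')$ once $h<(\delta_E-\delta')^2/3$; this is where the smallness of $h$ enters, not in any iteration. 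The lower bound is even simpler: by Corollary~\ref{Scompar}, $u\ge S_{h,-\infty}(d_E)\ge S_{h,-\infty}(d_{\R^2\setminus B'})$ for an exterior tangent ball $B'$, so the obstacle can be dropped entirely. This also disposes of the case analysis you sketch on $\{u=d_\Om\}$, which as written only yields the lower bound where $d_E-d_\Om\le\Lambda h$ rather than on all of $N$. If you wish to keep a barrier-type argument instead of the explicit ball solutions, you must exhibit barriers that are admissible competitors globally; as it stands, the key comparison step is unjustified.
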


\begin{lemma}\label{lemball}
Let $E\subseteq\Om$ with $\partial E$ of class $C^{1,1}$. 
Then, there exists $\delta>0$ and $T>0$ such that 
\begin{equation}\label{eqk}
\partial E_h(t)\ \textrm{satisfies the $\delta$-ball condition for all}\ t\in [0,T].
\end{equation}
\end{lemma}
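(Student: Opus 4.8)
The plan is to show that the $\delta$-ball condition on $\partial E_h(t)$ cannot degenerate too fast, by combining the curvature–distance estimate of Lemma~\ref{lemcha} with the geometric regularization built into one step of the scheme $T_h$. First I would set $\delta_0:=\delta_E>0$, the initial radius for which both $\partial E$ and $\partial\Om$ satisfy the ball condition. The key observation is that one step of the scheme, $F\mapsto T_hF$, is a minimizer of a prescribed-curvature problem (Proposition~\ref{proset}, equation~\eqref{eukappa}), so by standard regularity theory for such minimizers $\partial T_hF$ inherits both an interior and an exterior ball condition whose radius can be estimated from below in terms of $h$, the a~priori $L^\infty$ bound on the curvature $\kappa=-d_E/h$ on $\partial T_hF\setminus\partial\Om$ coming from~\eqref{dlip}, and the radius for which $\partial\Om$ satisfies its own ball condition. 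Concretely, on $\{|d_E|\le\delta'\}$ Lemma~\ref{lemcha} gives $|u-d_E|\le h/(\delta_E-\delta')$, hence on $\partial T_hE=\{u=0\}$ one has $|d_E|\le h/(\delta_E-\delta')$ and therefore $|\kappa|=|d_E|/h\le 1/(\delta_E-\delta')$ away from $\partial\Om$; on the obstacle the curvature bound is governed by $\partial\Om$.

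The second ingredient is to iterate: I would prove by induction that there is a constant $L>0$ (depending on $\Om$ and $\delta_0$) such that if $\partial E_h(kh)$ satisfies the $\delta_k$-ball condition with $\delta_k\ge\delta_0/2$, then $\partial E_h((k+1)h)$ satisfies the $\delta_{k+1}$-ball condition with $\delta_{k+1}\ge\delta_k - Lh$ (roughly, one step can shrink the ball radius by at most $O(h)$, since the curvature is bounded by $O(1/\delta_k)\le O(1/\delta_0)$ and the motion over a time step of length $h$ displaces the boundary by $O(h)$ while changing its curvature by $O(h)$). Choosing $T>0$ so that $LT\le\delta_0/4$, one then has $\delta_k\ge\delta_0/2$ for all $k\le T/h$, which gives~\eqref{eqk} with $\delta=\delta_0/2$ and this $T$, uniformly in $h$. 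Here it is essential that $n=2$: as emphasized just before the lemma, in the plane the mean curvature equals the total curvature of $\partial E$, so the pointwise curvature bound $|\kappa|\le C$ is equivalent to a genuine $C^{1,1}$ (i.e.\ uniform ball) bound, with no further regularity lost; the singular set $\Sigma$ of Proposition~\ref{propreg} is empty when $n=2$, so $\partial T_hE\cap\Om$ is globally $C^{2,\alpha}$.

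The main obstacle I anticipate is making the per-step estimate $\delta_{k+1}\ge\delta_k-Lh$ rigorous near the obstacle, i.e.\ at points of $\partial E_h(kh)$ that touch or nearly touch $\partial\Om$. Away from $\partial\Om$ the Euler--Lagrange equation~\eqref{eukappa} together with Lemma~\ref{lemcha} directly controls $\kappa$, and the ball condition follows from the usual density/monotonicity estimates for prescribed-curvature minimizers. On $\partial\Om$, however, $\partial T_hE$ coincides with $\partial\Om$ only on the contact set and the curvature there is only one-sided controlled (it can be as large as the curvature of $\partial\Om$ on the side of $\Om^c$, but is bounded on the side of $\Om$); one must check that this one-sided control, combined with the fact that $\partial\Om$ itself satisfies a fixed $\delta_0$-ball condition, still yields a two-sided ball condition of radius $\ge\delta_0/2-Lh$ for $\partial T_hE$. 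This is exactly the type of argument carried out in~\cite{BCCN}, and I would follow that analysis, using Proposition~\ref{propres} to rewrite $T_hE$ as an unconstrained minimizer with the forcing term $C\chi_{\Om^c}$ so that the obstacle contributes only a bounded forcing and the interface regularity is again reduced to the prescribed-curvature theory. A secondary technical point is propagating the hypothesis "$\partial E_h(kh)$ is $C^{1,1}$ with radius $\delta_k$'' forward so that Lemma~\ref{lemcha} is applicable at each step; this is immediate once the ball condition is in hand, since a compact hypersurface with a uniform two-sided ball condition is automatically $C^{1,1}$.
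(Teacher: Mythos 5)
Your proposal is correct and follows essentially the same route as the paper's proof: Lemma~\ref{lemcha} controls both the Hausdorff displacement $d_{\mathcal H}(\partial T_hE,\partial E)=O(h)$ and, via \eqref{eukappa} and Proposition~\ref{propreg}, the curvature $\|\kappa\|_{L^\infty(\partial T_hE)}\le \delta_E^{-1}(1+O(h))$, yielding a per-step loss of the ball radius of order $h/\delta_k$, which is then iterated over $\lfloor T/h\rfloor$ steps exactly as in your induction. The paper records this per-step estimate as $\delta_{T_hE}\ge\min\bigl(\|\kappa\|_{L^\infty}^{-1},\,\delta_E-d_{\mathcal H}(\partial T_hE,\partial E)\bigr)$, which to leading order is your $\delta_{k+1}\ge\delta_k-Lh$.
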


\begin{proof}
Let $\delta_{E}$ be as in Lemma \ref{lemcha}, and let $K=2/\delta_E$. By Lemma \ref{lemcha}, applied with 
$\delta'=Kh$, we get
\[
{\rm d}_\mathcal{H}(\partial T_h E,\partial E)\le \frac{h}{\delta_E-Kh}
\le \frac{h}{\delta_E}\left( 1+\frac{K}{\delta_E}h+\widehat C\frac{K^2}{\delta_E^2}h^2\right)
\]
for all $h\le h_0:=\delta_E^2/12$, where the constant $\widehat C>0$ is independent of $E$.
Recalling \eqref{eukappa} and Proposition \ref{propreg}, we get
\[
\|\kappa\|_{L^\infty(\partial T_hE)} \le \frac{1}{\delta_E}
\left( 1+\frac{K}{\delta_E}h+\widehat C\frac{K^2}{\delta_E^2}h^2\right)
\]
which implies
\begin{eqnarray}\label{eqiter}
\delta_{T_hE}&\ge& \min\left( \frac{1}{\|\kappa\|_{L^\infty(\partial T_hE)}},
\delta_E-{\rm d}_\mathcal{H}(\partial T_h E,\partial E)\right)
\\ \nonumber 
&\ge& \delta_E \cdot \min\left( 1-\frac{h}{\delta_E^2}\left(1+\frac{K}{\delta_E}h+\widehat C\frac{K^2}{\delta_E^2}h^2\right),
\left( 1+\frac{K}{\delta_E}h+\widehat C\frac{K^2}{\delta_E^2}h^2\right)^{-1}
\right)
\end{eqnarray}
for all $h\le h_0$. 
By iterating \eqref{eqiter} we obtain \eqref{eqk}.
\end{proof}

We now prove a short time existence and uniqueness result for solutions to \eqref{formalevol}.

\begin{theorem}\label{thshort}
Let $\partial\Om$ be of class $C^{1,1}$ and let $E\subseteq\Om$ with $\partial E$ of class $C^{1,1}$. 
Then there exists $T>0$ such that \eqref{formalevol} admits a unique $C^{1,1}$ solution 
$E(t)$ on $[0,T]$ with $E(0)=E$. 
\end{theorem}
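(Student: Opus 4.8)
The plan is to combine the uniform ball-condition estimate of Lemma~\ref{lemball} with the consistency result of Theorem~\ref{thcon} and the comparison principle of Corollary~\ref{corcomp}. Uniqueness is already in hand: by Corollary~\ref{corcomp}, any two $C^{1,1}$ solutions starting from $E$ coincide on their common interval of existence. So the content is existence. First I would invoke Lemma~\ref{lemball} to obtain $\delta>0$ and $T>0$ such that $\partial E_h(t)$ satisfies the $\delta$-ball condition for all $t\in[0,T]$ and all sufficiently small $h$. The point of this uniform estimate is that it transfers through the Hausdorff limit: defining $E(t)$ as the limit (along a subsequence $h_k\to 0$) of the discrete flows $E_{h_k}(t)$ as in~\eqref{eh}, the limit sets $\partial E(t)$ inherit the $\delta$-ball condition, hence $\partial E(t)$ is of class $C^{1,1}$ uniformly in $t\in[0,T]$, with a curvature bound independent of $t$. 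One also needs that $E(\cdot)$ is continuous in time — e.g. Hölder in $L^1(\Om)$ — which follows from the discrete Hölder estimate already recorded in the ``Existence of weak solutions'' subsection; combined with the uniform $C^{1,1}$ bound this upgrades to continuity of $t\mapsto\partial E(t)$ in the Hausdorff distance, and in fact to the Lipschitz-in-time and $L^\infty$-Hessian bounds~\eqref{regol} required by Definition~\ref{defsubsuper}.

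The heart of the argument is then to show that this limit $E(t)$ is both a subsolution and a supersolution of~\eqref{formalevol} in the sense of Definition~\ref{defsubsuper}, so that it is a solution. Here I would argue by contradiction using consistency. Suppose $E(\cdot)$ fails the supersolution inequality~\eqref{eqdistsup} on a set of positive measure in $U\times[0,T]$. Because $\partial E(t)$ is uniformly $C^{1,1}$, one can produce, near a point where the inequality fails, a smooth strict \emph{subsolution} $E'(\cdot)$ of the forced flow with $E'(0)\subseteq E(0)$ but $E'(t_0)\not\subseteq E(t_0)$ for some $t_0\in[0,T]$. By Theorem~\ref{thcon} the discrete flow $T_h^{[t/h]}E'(0)$ converges (Hausdorff) into $E'(t)$, while monotonicity of $T_h$ (Remark~\ref{remalm}) gives $T_h^{[t/h]}E'(0)\subseteq T_h^{[t/h]}E(0)=E_h(t)$; passing to the limit yields $E'(t_0)\subseteq E(t_0)$, a contradiction. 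The symmetric argument with smooth strict superflows shows $E(\cdot)$ is a subsolution. This is exactly the place where one uses that in dimension two the curvature bound controls the full second fundamental form, so the uniform $C^{1,1}$ bound on $\partial E_h(t)$ is genuinely available — this is the step flagged in the paper as failing in higher dimensions.

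The main obstacle I expect is precisely the passage from the uniform discrete ball-condition estimate to genuine $C^{1,1}$ regularity of the limit \emph{together with} enough time-regularity to make sense of~\eqref{regol} and~\eqref{eqdistsup}–\eqref{eqdistsub} pointwise a.e.: one must check that the distance function $d(x,t)=d_{E(t)}(x)$ of the limit flow is Lipschitz in $(x,t)$ on a neighborhood $U$ of $\bigcup_t\partial E(t)$ with $L^\infty$ spatial Hessian, and that the discrete Euler–Lagrange relation~\eqref{eukappa}, $\kappa+d_E/h=0$ on $\partial T_hE\setminus\partial\Om$, survives the limit to give the required differential inequalities where $\partial E(t)\subset\Om$, while on $\partial E(t)\cap\partial\Om$ the obstacle forces only the one-sided bound built into~\eqref{eqdistsub}. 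Once continuity in time and the uniform curvature bound are secured, identifying the limit as a solution via the consistency/comparison machinery is routine; shrinking $T$ if necessary guarantees $\partial E(t)$ stays in the neighborhood $U$ and the ball condition persists, which is all that is claimed.
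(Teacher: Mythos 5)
Your overall architecture matches the paper's: compactness of the discrete flows via Lemmas~\ref{lemcha} and~\ref{lemball}, passage to a uniformly $C^{1,1}$ limit $E(t)$, identification of the limit as a solution via consistency and barriers, and uniqueness from Corollary~\ref{corcomp}. (The paper gets the time regularity more directly than you suggest: the estimate~\eqref{dlip} shows each discrete step moves $\partial E_h$ by at most $O(h)$, so the interpolated distance functions $d_h$ are Lipschitz in $(x,t)$ and Arzel\`a--Ascoli applies; no detour through the $L^1$ H\"older estimate is needed.)

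The genuine gap is in the identification step, which is where the real work lies. First, your chain of inclusions does not close: by Definition~\ref{defconsist}, for a \emph{subsolution} $E'$ the Hausdorff limit of $T_h^{[t/h]}E'(0)$ is \emph{contained in} $E'(t)$, so from $T_h^{[t/h]}E'(0)\subseteq E_h(t)$ you cannot extract $E'(t_0)\subseteq E(t_0)$; for that you would need $E'$ to be a supersolution (so that the limit contains $E'(t_0)$). Second, and more substantively, you do not explain how a barrier violating the inclusion is produced from the a.e.\ failure of~\eqref{eqdistsup}, and this cannot be done by a single blanket construction. The paper splits the boundary: on $\{d>d_\Om\}$, two-sided comparison with smooth local solutions of the unconstrained flow gives the exact equation $\partial_t d=\Delta d$ (and then analyticity by parabolic regularity), so no failure can occur there; on the contact set $\{d=d_\Om\}$ one has $\partial_t d=0$ a.e., so the supersolution inequality reduces to the \emph{static} statement $\Delta d_\Om\le 0$ there. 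If this failed at a point $\bar x$, the paper takes a smooth set $\widetilde\Om\supset\Om$ touching $\partial\Om$ at $\bar x$ with $\Delta d_{\widetilde\Om}(\bar x)>0$, lets it evolve by unconstrained mean curvature flow, and observes that this is an \emph{exterior} subsolution barrier: comparison keeps $E(t)$ inside it, forcing $\partial_t d(\bar x,\bar t)\ge\Delta d_{\widetilde\Om}(\bar x)>0$, contradicting $\partial_t d(\bar x,\bar t)=0$. Note the barrier sits \emph{outside} $E$ and is a subsolution, the opposite of your configuration. You should repair both the orientation of the barrier and supply the reduction on the contact set before the argument is complete.
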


\begin{proof}
Let $E_h$ be as in \eqref{eh} and let 
\[
d_h(t)=\left( 1+\left[\frac t h\right]-\frac t h\right)d_{E_h(t)}+
\left( \frac t h-\left[\frac t h\right]\right)d_{E_h(t+h)}\,.
\]
By Lemmas \ref{lemcha} and \ref{lemball} there exist an open set $U\subset \R^n$ and $T>0$ such that 
$\partial E_h(t)\subset U$ for all $t\in [0,T]$ and $|\nabla^2 d_h|\in L^\infty(U\times [0,T])$;
moreover, recalling \eqref{dlip} we also have $d_h\in {\rm Lip}(U\times [0,T])$.
By the Arzel\`a-Ascoli Theorem the functions $d_h$ converge uniformly in $U\times [0,T]$, up to a subsequence as $h\to 0$,
to a limit function $d\in {\rm Lip}(U\times [0,T])$ such that $|\nabla^2 d|\in L^\infty(U\times [0,T])$
and $|\nabla d|=1$ in $U\times [0,T]$.
Letting $E(t)=\{ x:d(x,t)<0\}$, for all $t\in [0,T]$ we then have $E(0)=E$,
$E(t)\subset\Om$ and $\partial E(t)$ is of class $C^{1,1}$.

It remains to show that \eqref{eqdistsup} and \eqref{eqdistsub} hold in $U\times [0,T]$.
{}From Theorem \ref{thcon} it follows that, given a supersolution $\widetilde E(t)$ on $[t_1,t_2]\subset [0,T]$ 
with $\widetilde E(t_1)\subseteq E(t_1)$, we have $\widetilde E(t)\subseteq E(t)$ for all $t\in [t_1,t_2]$,
and the same holds with reversed inclusions if $\widetilde E(t)$ is a subsolution.
This implies that
\[
\frac{\partial d}{\partial t} = \Delta d \qquad {\rm a.e.\ in\ }
\left(U\times [0,T]\right)\cap \{d>d_\Om\}\cap \{d=0\},
\] 
which proves \eqref{eqdistsub}.
Observe that, by parabolic regularity,
$\partial E(t)\cap \Om$ is an analytic curve 
and the equality holds everywhere. 

As we have
\[
\frac{\partial d}{\partial t} = 0 \qquad {\rm a.e.\ in\ }
\left(U\times [0,T]\right)\cap \{d=d_\Om\},
\] 
the proof of \eqref{eqdistsup} amounts to show 
\begin{equation}\label{eqdelta}
\Delta d \le 0 \qquad {\rm a.e.\ in\ }
\left(U\times [0,T]\right)\cap \{d=d_\Om\}.
\end{equation}
Assume by contradiction that there exist $(\x,\t)\in (U\times (0,T))\cap \{d=d_\Om\}$ such that 
\begin{equation}\label{eqalpha}
\frac{\partial d}{\partial t}(\x,\t) = 0<\Delta d(\x,\t)=\Delta d_\Om(\x).
\end{equation}
Without loss of generality we can assume $d(\x,\t)=d_\Om(\x)=0$, and $d_\Om$ is
twice differentiable (in the classical sense) at $\x$.

Let us take an open set $\widetilde\Om\supset\Om$ with (compact) boundary of class $C^\infty$ and such that 
$$
\x\in\partial\widetilde\Om \qquad {\rm and}\qquad
\Delta d_{\widetilde\Om}(\x)\ge \Delta d_\Om(\x)>0. 
$$
We let $\widetilde\Om(t)$, for $t\in [0,\tau]$ and $\tau>0$, be the evolution by curvature of $\widetilde\Om$ 
\cite{ATW}, and observe that $\widehat E(t)=\widetilde\Om(t-\bar t)$, $t\in [\bar t,\bar t+\tau]$, 
is a subsolution in the sense of Definition \ref{defsubsuper}.
In particular, by Theorem \ref{thcon}
\[
E(t)\subseteq \widehat E(t) \qquad {\rm for\ all\ }t\in [\bar t,\bar t+\tau],
\]
but this implies, letting $\hat d(x,t)=d_{\widehat E(t)}(x)$ and recalling \eqref{eqalpha}, 
\[
0=\frac{\partial d}{\partial t}(\bar x,\bar t)\ge \frac{\partial \hat d}{\partial t}(\bar x,\bar t)
= \Delta d_{\widetilde\Om}(\x)\ge \Delta d_\Om(\x)>0,
\]
leading to a contradiction. This proves \eqref{eqdelta} and thus \eqref{eqdistsup}.

Finally, the uniqueness of $E(t)$ follows from Corollary \ref{corcomp}.
\end{proof}

\begin{remark}\rm
Notice that in Theorem \ref{thshort} it is enough to assume that $\Om$ satisfies the 
exterior $R$-ball condition for some $R>0$, which is a weaker assumption than 
requiring $\partial \Om$ to be of class $C^{1,1}$. Indeed, we can approximate $\Om$ with the sets
\[
\Om_\rho := \bigcup_{B_\rho(x)\subseteq\Om}B_\rho(x) \qquad \rho>0.
\]
Notice that $\Omega_\rho\subseteq \Om$ and $\partial \Om_\rho$ is of class $C^{1,1}$, for all $\rho>0$.
If we take $\rho$ small enough so that $E\subseteq\Om_\rho$ then,  
by Theorem \ref{thshort} applied with $\Om$ replaced by $\Om_\rho$, we obtain a solution 
$E_\rho(t)$ on $[0,T_\rho]$. However, $E_\rho(t)$ is also a solution 
of the original problem, with constraint $\Om$ instead of $\Omega_\rho$, since
$\Omega_\rho$ is a subsolution to \eqref{formalevol} in the sense of Definition \ref{defsubsuper}.
\end{remark}

\section{Positive mean curvature flow}\label{secpos}

In this section we consider the geometric equation
\begin{equation}\label{eqkpos}
v = \max(\kappa, 0).
\end{equation}
Notice that, by passing to the complementary set, \eqref{eqkpos} 
includes the evolution by negative mean curvature $v=\min(\kappa,0)$.

\begin{definition}\label{defsubpos}
Given a family of sets $E(t)$, $t\in [0,T]$, we set
$$d(x,t):= d_{E(t)}(x).$$
We say that $E(t)$ is a $C^{1,1}$ solution of \eqref{eqkpos}
if there exists a bounded open set $U\subset\R^n$ 
such that $\partial E(t)\subset U$ for all $t\in [0,T]$, 
\begin{equation*}
d\in {\rm Lip}(U\times [0,T])
\qquad \qquad 
|\nabla^2 d|\in L^\infty(U\times [0,T])
\end{equation*}
and 
\begin{equation}\label{eqdistpos}
\frac{\partial d}{\partial t} = \max\left(\Delta d, 0\right) + O(d)\qquad {\rm a.e.\ in\ }U\times [0,T].
\end{equation} 
\end{definition}

\begin{lemma}\label{lemcon}
Let $E_1(t)$ and $E_2(t)$, with $t\in [t_1,t_2]$, 
be two $C^{1,1}$ solutions of \eqref{eqkpos} 
in the sense of Definition \ref{defsubpos}. 
Then, if $E_1(t_1)\subseteq E_2(t_1)$, it follows that
$E_1(t)\subseteq E_2(t)$ for all $t\in [t_1,t_2]$.
In particular, if $\partial E$ is compact and of class $C^{1,1}$, there exists at most one solution $E(t)$ 
starting from $E$.
\end{lemma}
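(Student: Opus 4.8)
The plan is to reduce the comparison for the positive curvature flow \eqref{eqkpos} to the comparison principle for the obstacle flow \eqref{formalevol} already obtained in Corollary \ref{corcomp}, by choosing the obstacle to be the larger set. Concretely, given the two solutions $E_1(t)\subseteq E_2(t)$ on $[t_1,t_2]$, I would set $\Om:=E_2(t_1)$ and observe that, since the positive curvature flow is nonincreasing with respect to inclusion (a set evolving by $v=\max(\kappa,0)$ only shrinks), both $E_1(t)$ and $E_2(t)$ stay inside $\Om$. The key structural point is that a $C^{1,1}$ solution of \eqref{eqkpos} which remains inside $\Om=E_2(t_1)$ is automatically a $C^{1,1}$ solution of \eqref{formalevol} with this choice of obstacle: where $\partial E_i(t)$ lies in the interior of $\Om$ we have $\partial d/\partial t=\max(\Delta d,0)\ge \Delta d + O(d)$ for \eqref{eqdistsup} and, since $\{d>d_\Om\}$ contains a neighborhood of such points, also $\partial d/\partial t=\max(\Delta d,0)\le \Delta d + O(d)$ fails only when $\Delta d<0$ — but then $\max(\Delta d,0)=0 \le \Delta d$ is false, so one must be a little careful here; see below.

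Let me make the reduction cleaner. I would show $E_2(t)$ is a supersolution and $E_1(t)$ is a subsolution of \eqref{formalevol} with $\Om=E_2(t_1)$. For $E_2$: at points of $\partial E_2(t)$ in the interior of $\Om$, $\partial d_2/\partial t = \max(\Delta d_2,0)\ge \Delta d_2 + O(d_2)$, giving \eqref{eqdistsup}; we must also know $\partial E_2(t)\subseteq\Om$, which is exactly the monotonicity of the positive curvature flow applied to $E_2$ with initial datum $E_2(t_1)=\Om$. For $E_1$: the subsolution inequality \eqref{eqdistsub} is only required on $\{d_1>d_\Om\}$, i.e. away from the obstacle boundary; there $\partial d_1/\partial t=\max(\Delta d_1,0)$. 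When $\Delta d_1\le 0$ this gives $\partial d_1/\partial t = 0$, and we need $0\le \Delta d_1 + O(d_1)$, which need not hold pointwise. The standard fix is to absorb such terms: one replaces the obstacle $\Om$ by a slightly dilated $\Om_\delta\supseteq E_2(t_1)$ (or, equivalently, uses that on $\{|d_1|\le \delta\}\cap\{d_1>d_\Om\}$ the quantity $O(d_1)$ is an $O(\delta)$ error and perturbs $E_1$ by $E_1^\delta(t)=\{d_1\le -\delta - C\delta(t-t_1)\}$ as in the consistency proof of Theorem \ref{thcon}), turning $E_1$ into a genuine subsolution for \eqref{formalevol} with obstacle $\Om$, then letting $\delta\to 0$.

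With both facts in hand, Corollary \ref{corcomp} applied to the flow \eqref{formalevol} with obstacle $\Om=E_2(t_1)$, to the subsolution $E_1(t)$ and supersolution $E_2(t)$ with $E_1(t_1)\subseteq E_2(t_1)$, yields $E_1(t)\subseteq E_2(t)$ for all $t\in[t_1,t_2]$. Uniqueness for a $C^{1,1}$-regular initial datum $E$ with compact boundary then follows by applying this to two solutions starting from $E$, in both orders of inclusion. The main obstacle I anticipate is precisely the handling of the lower-order term $O(d)$ in the subsolution inequality near the obstacle: because \eqref{eqkpos} switches off the motion exactly where $\Delta d\le 0$, the naive identification with \eqref{formalevol} fails on a set of positive measure, and one genuinely needs the dilation/perturbation argument to recover a valid subsolution — this is the only place where the geometry (the ball conditions on $\partial E_i(t)$ guaranteed by the $C^{1,1}$ hypothesis, giving the needed uniform control on $O(d)$) really enters.
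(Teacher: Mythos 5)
Your reduction to Corollary \ref{corcomp} does not go through, and the gap is not where you locate it. The supersolution half is fine: $\partial d_2/\partial t=\max(\Delta d_2,0)+O(d_2)\ge \Delta d_2+O(d_2)$, and $E_2(t)\subseteq E_2(t_1)=\Om$ by monotonicity of the flow. The problem is the subsolution half. With $\Om=E_2(t_1)$, the set $\{d_1>d_\Om\}$ on which \eqref{eqdistsub} must hold contains every point of $\partial E_1(t)$ lying strictly inside $E_2(t_1)$; at any such point where $\Delta d_1<0$ (a concave arc of $\partial E_1(t)$), equation \eqref{eqdistpos} gives $\partial d_1/\partial t=O(d_1)$, whereas \eqref{eqdistsub} demands $\partial d_1/\partial t\le \Delta d_1+O(d_1)$. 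The discrepancy is $(\Delta d_1)^-$, which is of size up to $1/\rho$ (the curvature bound), not of size $O(d_1)$ or $O(\delta)$: a solution of the positive curvature flow stands still on its concave portions, while a subsolution of the (unconstrained) curvature flow must expand there at a definite rate. No shift $\{d_1\le -\delta-C\delta(t-t_1)\}$ with small $\delta$ can absorb an $O(1)$ error --- you would need $C\delta\gtrsim 1/\rho$ --- so this is not the ``standard fix for lower-order terms,'' and attributing the difficulty to the $O(d)$ term is a misdiagnosis. (Taking $\Om=E_1(t_1)$ instead does not help, since then $E_2(t)\not\subseteq\Om$ in general, and $E_1$, $E_2$ solve obstacle problems with \emph{different} obstacles.)

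The paper argues directly, using that both sets solve the \emph{same} equation with the monotone nonlinearity $r\mapsto\max(r,0)$: it perturbs $E_2$ to $E_\e(t)=\{d_{E_2(t)}+\e+C\e t\le 0\}$, checks via \eqref{eqee} that for $C=2K^2$ and $\tau<1/C$ this is a strict supersolution of \eqref{eqkpos} on a short time interval (here $K$ is the curvature bound, and this is where the $O(d)$ term really is absorbed, by the drift $C\e t$), and then shows that $D_\e(t)=\dist(\partial E_1(t),\partial E_\e(t))$ is nondecreasing; the first-contact argument works because at a touching point the ordering of the curvatures is preserved by $\max(\cdot,0)$, whereas your route discards exactly this structure by replacing $\max(\Delta d,0)$ with $\Delta d$ on one side. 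If you wish to salvage a reduction, the relevant statement is Proposition \ref{prostacle} (the flow \eqref{eqkpos} from $E$ coincides with \eqref{formalevol} with obstacle $\Om=E$), but even then Corollary \ref{corcomp} cannot be applied to the pair $(E_1,E_2)$, and a direct comparison argument along the paper's lines is needed.
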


\begin{proof}
Notice that it is enough to prove the thesis with $t_2=t_1+\tau$, for some $\tau>0$,
since the general claim then follows by iteration.
Fix $\e>0$, let $d_\e(x,t):= d_{E_2(t)}(x)+\e+C\e t$ and let 
\[
E_\e(t) := \{ x:\,d_\e(x,t)\le 0\} \qquad t\in [t_1,t_1+\tau],
\] 
where the positive constants $C,\tau$ will be determined later. 
Notice that $\partial E_\e$ is compact and of class $C^{1,1}$ for all $\e$ small enough,
and $E_\e(t)\to E_2(t)$ as $\e\to 0$. A direct computation gives
\begin{equation}\label{eqee}
\frac{\partial d_\e}{\partial t} \ge \max\left(\Delta d_\e+ \e \left( C - CK^2\tau - K^2\right), 0\right) 
+O(d_\e)
\qquad {\rm a.e.\ in\ }U\times [t_1,t_1+\tau],
\end{equation}
where 
\[
K= \sup_{x\in [t_1,t_2]}\|\Delta d_{E_2(t)}\|_{L^\infty(\partial E_2(t))}.
\]
If we choose $C=2K^2$ and $\tau<1/C$, \eqref{eqee} implies that $E_\e(t)$ is a supersolution 
of $\eqref{eqkpos}$. 
Letting $D_\e(t):= {\rm dist}(\partial E_1(t),\partial E_\e(t))$, we have that
$D_\e$ is Lipschitz continuous, $D_\e(0)\ge \e$ and $D_\e'(t)\ge 0$ for a.e. $t\in [t_1,t_1+\tau]$. 
As a consequence, $E_\e(t)\subseteq E_1(t)$ for all $t\in [t_1,t_1+\tau]$, and the thesis follows by letting $\e\to 0$.
\end{proof}

\begin{remark}\rm
Notice that the viscosity theory \cite{users} applies to equation \eqref{eqkpos},
since the function $\kappa\to\max(\kappa, 0)$ is continuous. Then,
Lemma \ref{lemcon} implies that, if the initial set has compact boundary 
of class $C^{1,1}$, the corresponding viscosity solution does not create 
fattening, i.e. is unique, before the onset of singularities.
Corollary~\ref{corpos} below will establish the existence of
such $C^{1,1}$ solutions.
\end{remark}

Given $E\subset\R^n$ and $h>0$, we set $E^0_h=\wE^0_h= E$ and, by iteration,
\begin{equation}\label{eqiterbis}
\begin{aligned}
\wE^n_h &:= \left\{S_{h,d_{\wE^{n-1}_h}}\big(d_{\wE^{n-1}_h}\big)< 0\right\}
\\
E^n_h &:= \left\{ S_{h,d_{E}}\big(d_{E^{n-1}_h}\big)< 0\right\}
\end{aligned}\end{equation}
for all $n\in\mathbb N$.
We also let $\E_h(t):=\wE^{[t/h]}_h$ and $E_h(t):=E^{[t/h]}_h$.
Notice that $E_h(t)$ is the discretized evolution corresponding to the mean curvature flow
with obstacle $\Om=E$ (see Definition \ref{defscheme}), while 
$\E_h(t)$ is an implicit scheme for \eqref{eqkpos}.

\begin{proposition}\label{prostacle}
Let $h>0$ and let $E\subset \R^n$ be a set with compact boundary. Then 
$$
\E_h(t)=E_h(t)\qquad {\ for\ all\ }\quad t\ge 0\,.
$$
In particular
\begin{equation}\label{comph}
E_h(t_2) \subseteq E_h(t_1)\qquad {\ for\ all\ }\quad t_1\le t_2.
\end{equation}
\end{proposition}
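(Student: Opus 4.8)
The plan is to prove by induction on $n$ that $\wE^n_h = E^n_h$ for all $n\ge 0$. The base case $n=0$ is immediate since both sets equal $E$. For the inductive step, suppose $\wE^{n-1}_h = E^n_h =: F$. Then $\wE^n_h = \{S_{h,d_F}(d_F)<0\}$, which by Definition \ref{defscheme} (with $\Om$ replaced by $F$) is exactly $T_h F$, the minimal solution of the obstacle problem \eqref{probset} with constraint $F\subset\R^n$. On the other hand, $E^n_h = \{S_{h,d_E}(d_F)<0\}$; the key observation I would make here is that, since $E^n_h$ is built iteratively with the \emph{fixed} obstacle $\Om = E$ from $E^{n-1}_h$, and since $E^{n-1}_h \subset E$ by the induction hypothesis together with $E^{n-1}_h = \wE^{n-1}_h = T_h^{\,n-1}E \subseteq E$ (the scheme $T_h$ is decreasing along the orbit starting at the obstacle, which is itself what we are proving --- see below), we can apply Proposition \ref{proset} to identify $E^n_h$ as the minimal solution of $\min_{F'\subseteq E} P(F') + \tfrac1h\int_{F'} d_F\,dx$.

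The heart of the matter is therefore to show that these two variational problems --- $\min_{F'\subseteq F} P(F') + \tfrac1h\int_{F'} d_F\,dx$ and $\min_{F'\subseteq E} P(F') + \tfrac1h\int_{F'} d_F\,dx$ --- have the same minimal solution $T_h F$. The inclusion $T_h F \subseteq F \subseteq E$ shows that the minimal solution of the first problem is admissible for the second; conversely, since $d_F \le d_E \le 0$ on $E\setminus F$ has the wrong sign only where $d_F>0$, one checks that intersecting any competitor for the second problem with $F$ cannot increase the energy, because $d_F \ge 0$ outside $F$ and removing the part outside $F$ decreases both the perimeter term (by the exterior ball / calibration argument already used in Proposition \ref{probound}, or more simply since $P(F'\cap F)\le P(F')$ when the relevant boundary traces are controlled --- actually the clean way is: $T_h F$ solves the first problem and is $\subseteq F$, hence solves the second; and any solution $G$ of the second problem satisfies $G\cap F$ has no larger energy, so the minimal solution of the second is $\subseteq F$ and hence also solves the first). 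Running this comparison carefully gives $E^n_h = T_h F = \wE^n_h$, closing the induction.

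Once $\wE^n_h = E^n_h$ is established, the identity $\E_h(t) = E_h(t)$ for all $t$ follows since both are defined as $[t/h]$-fold iterates. For the monotonicity \eqref{comph}: by Definition \ref{defscheme}, $E_h(t) = T_h^{[t/h]} E$ with obstacle $\Om = E$, and $T_h F \subseteq F$ for every admissible $F$ (as $T_h F$ is the minimal solution of \eqref{probset} and $F=\Om$ itself --- wait, here $F$ need not be $\Om$; rather $T_h F \subseteq \Om = E$ always, and more importantly $T_h F \subseteq F$ whenever $F\subseteq E$, which we just used). Thus along the orbit the sets are nested decreasingly: $E \supseteq T_hE \supseteq T_h^2 E \supseteq \cdots$, and since $t\mapsto [t/h]$ is nondecreasing, $t_1\le t_2$ forces $[t_1/h]\le [t_2/h]$ and hence $E_h(t_2)\subseteq E_h(t_1)$.

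I expect the main obstacle to be the variational comparison in the second paragraph: proving rigorously that enlarging the obstacle from $F$ to $E$ (with $E \supseteq F$) does not enlarge the minimal solution of the forced-perimeter problem, i.e. that $T_h F$ remains the minimal solution even after relaxing the constraint to $F'\subseteq E$. The cleanest route is to invoke Proposition \ref{promono} / Corollary \ref{Scompar}: with $v_1 = d_F \le d_E = v_2$ interpreted correctly --- actually one wants the constraint from the \emph{obstacle}, so one uses that $S_{h,d_F}(d_F) \ge S_{h,-\infty}(d_F)$ and the sub/superlevel set characterization of Proposition \ref{proset} --- and then to note that since $d_F > 0$ precisely on $F^c$, the forcing term $d_F/h$ is positive exactly where the larger admissible region $E\setminus F$ sits, so no minimizer of the relaxed problem will extend there. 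This last point is exactly the kind of "energy decreases under intersection with $F$" argument, and making it airtight (handling the perimeter of $F'\cap F$ versus $F'$, using that $\partial F$ is nice enough) is the only genuinely non-formal step.
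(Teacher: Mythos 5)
Your overall skeleton is the same as the paper's: reduce to showing $\wE^n_h=E^n_h$ by identifying both sets, via Proposition~\ref{proset}, as the minimal solutions of the geometric problem $\min P(X)+\frac1h\int_X d_F\,dx$ over the two constraint classes $X\subseteq F$ and $X\subseteq E$, and then arguing that the minimal solution of the relaxed ($E$-constrained) problem already lies inside $F$, so that the two problems have the same minimal solution. That containment is the whole point, and it is exactly where your argument has a genuine gap. You propose to establish it by intersecting an arbitrary competitor $G\subseteq E$ with $F$ and claiming the energy cannot increase; but the inequality $P(G\cap F)\le P(G)$ is false in general (submodularity only gives $P(G\cap F)+P(G\cup F)\le P(G)+P(F)$, and converting that into what you need requires testing the energy that $F$ itself minimizes against $F\cup G$, a comparison you never set up). Your fallback appeal to monotonicity is also garbled by a sign error: since $F\subseteq E$ and the signed distance is negative inside the set, one has $d_F\ge d_E$, not $d_F\le d_E$ (likewise $d_F\ge 0\ge d_E$ on $E\setminus F$, not ``$d_F\le d_E\le 0$'').

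The paper closes this step in one line, with no intersection or submodularity argument at all: apply Proposition~\ref{promono} with the \emph{fixed} obstacle $v=d_E$ and the ordered data $d_F\ge d_E$ (for $n=2$; inductively $d_{E^{n-1}_h}\ge d_{E^{n-2}_h}$), to get $S_{h,d_E}(d_F)\ge S_{h,d_E}(d_E)$ and hence $E^2_h=\{S_{h,d_E}(d_F)<0\}\subseteq\{S_{h,d_E}(d_E)<0\}=F$. Once $E^2_h\subseteq F$ is known, it is admissible for the $F$-constrained problem, achieves the common minimal value, and is contained in every minimizer of either problem, so it coincides with $\wE^2_h$; iterating gives the proposition. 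This also dissolves the circularity you worry about concerning the decreasingness of the orbit, which on the $\wE$ side is anyway immediate from the obstacle constraint $S_{h,d_F}(d_F)\ge d_F$. Your treatment of \eqref{comph} is fine once the identity is in place, but as written the core containment step of your induction does not go through.
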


\begin{proof}
We have to show that $\E^n_h=E^n_h$ for all $n\in\mathbb N$.
By the definition we have $\E^1_h =E^1_h=:F$. If we also show that 
$\E^2_h =E^2_h$, then the thesis follows by iteration. 
As $d_F\ge d_E$, by Proposition \ref{promono} we have that $S_{h,d_E}(d_F)\ge S_{h,d_E}(d_E)$,
so that 
\begin{equation}\label{recall}
E^2_h=\left\{ S_{h,d_E}(d_F)< 0\right\}
\subset \left\{ S_{h,d_E}(d_E)< 0\right\}=F.
\end{equation}
By Proposition \ref{proset} we know that $E^2_h$ is the minimal solution of 
\[
\min_{X\subset E}P(X)+\frac 1 h \int_X d_F\,dx\,.
\]
Recalling \eqref{recall} it then follows that $E^2_h$ is also the minimal solution of 
\[
\min_{X\subset F}P(X)+\frac 1 h \int_X d_F\,dx
\]
and hence coincides with
$\E^2_h$, again by Proposition \ref{proset}.
\end{proof}

Proposition \ref{prostacle} implies that the evolution \eqref{eqkpos},
with initial set $E$, can be seen as a particular case of \eqref{formalevol} with $\Om=E$.
As a consequence, from Theorem \ref{thshort} we get  
a short time existence result for regular solutions to \eqref{eqkpos}.

\begin{corollary}\label{corpos}
Let $E\subset \R^2$ with compact boundary of class $C^{1,1}$. 
Then there exists $T>0$ such that \eqref{eqkpos} admits a unique solution 
$E(t)$ on $[0,T]$ with $E(0)=E$ and $\partial E(t)$ a compact set of class $C^{1,1}$
for all $t\in [0,T]$.
Moreover
\begin{equation}\label{comp}
E(t_2) \subseteq E(t_1)\qquad {\ for\ all\ }\quad t_1\le t_2.
\end{equation}
\end{corollary}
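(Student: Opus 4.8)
The plan is to realize the positive mean curvature flow~\eqref{eqkpos} as the obstacle–constrained flow~\eqref{formalevol} with $\Om=E$ and then invoke the results already proved. By Proposition~\ref{prostacle}, with $\Om=E$ the discrete positive–curvature scheme $\wE_h(t)$ coincides with the obstacle scheme $E_h(t)=T_h^{[t/h]}E$, so~\eqref{eqkpos} is a particular case of~\eqref{formalevol} with obstacle $\Om=E$; moreover the monotonicity~\eqref{comph} holds. Since $\partial E$ is compact and of class $C^{1,1}$, we have $\partial\Om=\partial E$ of class $C^{1,1}$, and Theorem~\ref{thshort} (applied with obstacle $\Om=E$ and initial datum $E$) provides $T>0$, a subsequence $h_k\to0$, and a limit flow $E(t)$, $t\in[0,T]$, with $E(0)=E$, $\partial E(t)$ compact of class $C^{1,1}$, $E_{h_k}(t)=\wE_{h_k}(t)\to E(t)$ (locally in $L^1$, with Hausdorff convergence of the boundaries), and such that $E(t)$ is a $C^{1,1}$ solution of~\eqref{formalevol} for $\Om=E$. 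In particular $d(x,t):=d_{E(t)}(x)$ has the regularity required in Definition~\ref{defsubpos}, satisfies $d\ge d_\Om$ everywhere, and all the (in)equalities established in the proof of Theorem~\ref{thshort} are at our disposal.

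First I would deduce~\eqref{comp}: passing to the limit in~\eqref{comph} along $h_k\to0$ gives $E(t_2)\subseteq E(t_1)$ up to a null set for $t_1\le t_2$, and since the sets $E(t)$ are $C^{1,1}$–regular this is a genuine inclusion. Equivalently, $t\mapsto d(x,t)$ is nondecreasing, so $\partial d/\partial t\ge0$ a.e.\ in $U\times[0,T]$.

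The main point is to check that $E(t)$ solves~\eqref{eqkpos} in the sense of Definition~\ref{defsubpos}; the regularity requirements are inherited directly from Theorem~\ref{thshort}, so only~\eqref{eqdistpos} remains. I would split $U\times[0,T]$ into $\{d>d_\Om\}$ (obstacle inactive) and $\{d=d_\Om\}$ (where $\partial E(t)$ touches $\partial\Om=\partial E$); these two sets cover $U\times[0,T]$ since $d\ge d_\Om$. On $\{d>d_\Om\}$, the proof of Theorem~\ref{thshort} gives $\partial d/\partial t=\Delta d+O(d)$ (combining its super- and subsolution inequalities), and by parabolic regularity $\partial E(t)\cap\Om$ is an analytic curve on which this holds classically; together with $\partial d/\partial t\ge0$, a short case distinction on the sign of $\Delta d$ shows $\partial d/\partial t=\max(\Delta d,0)+O(d)$, which is~\eqref{eqdistpos}. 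On $\{d=d_\Om\}$ one has $\partial d/\partial t=\partial d_\Om/\partial t=0$, while~\eqref{eqdelta} (proved inside Theorem~\ref{thshort}) gives $\Delta d\le0$ a.e., hence $\max(\Delta d,0)=0=\partial d/\partial t$, again~\eqref{eqdistpos}. Thus $E(t)$ is a $C^{1,1}$ solution of~\eqref{eqkpos}, and uniqueness follows from Lemma~\ref{lemcon}.

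The delicate step is this last one: converting the obstacle description into the precise form~\eqref{eqdistpos}, the free–boundary identity $\partial d/\partial t=\Delta d+O(d)$ only produces the positive part $\max(\Delta d,0)$ because we separately know $\partial d/\partial t\ge0$, so the argument genuinely uses the monotonicity~\eqref{comp} coming from Proposition~\ref{prostacle}; one must also keep track of the $O(d)$ error terms and remember that all identities are a priori only almost everywhere, the latter being harmless on the free boundary thanks to parabolic regularity.
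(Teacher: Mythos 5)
Your proposal is correct and follows the paper's route: reduce \eqref{eqkpos} to the obstacle flow via Proposition~\ref{prostacle}, invoke Theorem~\ref{thshort} for existence and regularity, get \eqref{comp} from \eqref{comph}, and conclude uniqueness from Lemma~\ref{lemcon}. The only (minor) divergence is in extracting \eqref{eqdistpos}: the paper re-applies Proposition~\ref{prostacle} at every intermediate time $\bar t$ so as to view $E(t)$ on $[\bar t,T]$ as the obstacle flow with the updated obstacle $\Om=E(\bar t)$ and reads off $\partial d/\partial t=\max(\Delta d+O(d),0)$ directly from \eqref{eqdistsup}, whereas you keep the obstacle fixed at $E(0)$ and instead combine the identities from the proof of Theorem~\ref{thshort} with the monotonicity $\partial d/\partial t\ge 0$ and a case analysis on the sign of $\Delta d$; both arguments are sound, and yours has the merit of making explicit where the monotonicity \eqref{comp} is actually used.
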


\begin{proof}
Thanks to Theorem \ref{thshort} 
there exist $T>0$ 
and a unique solution $E(t)$ of \eqref{formalevol} on $[0,T]$, with $E(0)=E=\Omega$ and $\partial E(t)$ of class $C^{1,1}$.
By Proposition \ref{prostacle}, for all $\bar t\in [0,T)$, $E(t)$ is the solution of \eqref{formalevol} on $[\bar t,T]$ 
with obstacle $\Om=E(\bar t)$.
In particular, letting as above $d(x,t)=d_{E(t)}(x)$ and recalling \eqref{eqdistsup}, this implies
\begin{equation*}
\frac{\partial d}{\partial t} = \max\left(\Delta d + O(d),0\right)\qquad {\rm a.e.\ in\ }U\times [0,T]\,,
\end{equation*}
that is, $E(t)$ is the solution of \eqref{eqkpos} in the sense of Definition \ref{defsubpos}.

The uniqueness of $E(t)$ follows from Lemma \ref{lemcon}, and \eqref{comp} follows from \eqref{comph}.
\end{proof}

\section*{Acknowledgements}
The authors wish to thank the anonymous referees for their careful
reading of the paper and helpful comments. Part of this work was
done during a visit of the second author to Padova university,
supported by the GNAMPA 2011 project {\it Propagazione di fronti e comportamento asintotico in connessione a problemi di omogeneizzazione}. The third author also acknowledges partial support by the Fondazione CaRiPaRo Project {\it Nonlinear Partial Differential Equations: models, analysis, and control-theoretic problems}.



\begin{thebibliography}{10}

\bibitem{Alm2009}
L. Almeida, P. Bagnerini, A. Habbal, S. Noselli, F. Serman.
\newblock Tissue repair modeling. 
\newblock in {\em Singularities in Nonlinear Evolution Phenomena and Applications},
CRM Series, vol. 9, 2009.

\bibitem{Alm2011}
L. Almeida, P. Bagnerini, A. Habbal, S. Noselli, F. Serman.
\newblock A mathematical model for dorsal closure.
\newblock {\em J. Theor. Biol.}, 268(1):105--119, 2011.

\bibitem{AlmBagHab}
L. Almeida, P. Bagnerini, A. Habbal.
\newblock Modeling actin cable contraction.
\newblock {\em Preprint}, 2011, to appear in {\em Computers
and Mathematics with Applications}.

\bibitem{AlmDem}
L. Almeida, J. Demongeot.
\newblock Predictive power of ``a minima'' models in biology.
\newblock {\em Preprint}, 2011, to appear in {\em Acta Biotheoretica}.

\bibitem{ATW}
F.~Almgren, J.~E. Taylor, and L.-H. Wang.
\newblock Curvature-driven flows: a variational approach.
\newblock {\em SIAM J. Control Optim.}, 31(2):387--438, 1993.

\bibitem{AmbrosioMM}
L.~Ambrosio.
\newblock Movimenti minimizzanti.
\newblock {\em Rend. Accad. Naz. Sci. XL Mem. Mat. Appl. (5)}, 19:191--246,
  1995.

\bibitem{AFP}
L.~Ambrosio, N.~Fusco, and D.~Pallara.
\newblock {\em Functions of bounded variation and free discontinuity problems}.
\newblock Oxford Mathematical Monographs. The Clarendon Press Oxford University
  Press, New York, 2000.

\bibitem{BCN}
G. Barles, A. Cesaroni and M. Novaga,
\newblock Homogenization of fronts in highly heterogeneous media.
\newblock {\em SIAM J. on Math. Anal.}, 43(1):212--227, 2011.

\bibitem{BDL}
G. Barles and F. Da Lio.
\newblock Remarks on the Dirichlet and State-Constraint Problems for Quasilinear Parabolic Equations.  
\newblock {\em Adv. Differential Equations}, 8(8):897--922, 2003.

\bibitem{BellettiniNovaga-B3}
G.~Bellettini and M.~Novaga.
\newblock Comparison results between minimal barriers and viscosity solutions
  for geometric evolutions.
\newblock {\em Ann. Scuola Norm. Sup. Pisa Cl. Sci.}, 26(1):97--131, 1998.

\bibitem{BCCN}
G.~Bellettini, V. Caselles, A.~Chambolle and M.~Novaga.
\newblock Crystalline mean curvature flow of convex sets.  
\newblock {\em Arch. Ration. Mech. Anal.}, 179(1):109--152, 2006.

\bibitem{Caf}
L. Caffarelli. 
\newblock The obstacle problem revisited. 
\newblock {\em J. Four. Anal. Appl.}, 4:383--402, 1998.

\bibitem{CLS}
P. Cardaliaguet, P.-L. Lions and P. Souganidis.
\newblock A discussion about the homogenization of moving interfaces.
\newblock {\em J. Math. Pures Appl.}, 91(4):339--363, 2009.

\bibitem{CasellesC-convex}
V. Caselles and A. Chambolle.
\newblock Anisotropic curvature-driven flow of convex sets.
\newblock {\em Nonlinear Anal.}, 65(8):1547--1577, 2006.

\bibitem{ChamMCM}
A.~Chambolle.
\newblock An algorithm for mean curvature motion.
\newblock {\em Interfaces Free Bound.}, 6(2):195--218, 2004.

\bibitem{ChFor}
A. Chambolle, V. Caselles, D. Cremers, M. Novaga and T. Pock,
\newblock An introduction to Total Variation for Image Analysis.
\newblock In {\em Theoretical Foundations and Numerical Methods for Sparse Recovery},
De Gruyter, Radon Series Comp. Appl. Math., vol. 9, pp. 263-340, 2010.

\bibitem{CN-ATW}
A.~Chambolle and M.~Novaga.
\newblock Implicit time discretization of the mean curvature flow with a
  discontinuous forcing term.
\newblock {\em Interfaces Free Bound.}, 10:283--300, 2008.

\bibitem{CB}
B. Craciun and K. Bhattacharya. 
\newblock Effective motion of a curvature-sensitive interface through a heterogeneous medium. 
\newblock {\em Interfaces Free Bound.}, 6:151--173, 2004.

\bibitem{users}
M. Crandall, H. Ishii and P.-L. Lions.
\newblock User's guide to viscosity solutions of second order partial differential equations.
\newblock {\em Bull. Amer. Math. Soc.}, 27(1):1--67, 1992.

\bibitem{DL}
F. Da Lio.
\newblock Comparison results for quasilinear equations in annular domains and applications.
\newblock {\em Comm. Partial Differential Equations}, 27(1-2):283--323, 2002.

\bibitem{DKY}
N. Dirr, G. Karali and N. K. Yip. 
\newblock Pulsating wave for mean curvature flow in inhomogeneous medium. 
\newblock {\em European J. Appl. Math.}, 19:661-699, 2008.

\bibitem{Hutson}
M. S. Hutson, Y. Tokutake, M. Chang, J. Bloor, S. Venakides, D. P Kiehart and G. Edwards.
\newblock Forces for morphogenesis investigated with laser microsurgery and quantitative modeling. 
\newblock {\em Science}, 300(5616):145--149, 2003.

\bibitem{KohnSerfaty}
R. V. Kohn, S.~Serfaty.
\newblock A deterministic-control based approach to fully nonlinear parabolic and elliptic equations.
\newblock {\em Comm. Pure Appl. Math.}, 63:1298--1350, 2010.

\bibitem{LuckhausSturz}
S.~Luckhaus and T.~Sturzenhecker.
\newblock Implicit time discretization for the mean curvature flow equation.
\newblock {\em Calc. Var. Partial Differential Equations}, 3(2):253--271, 1995.

\bibitem{Meyer}
Y.~Meyer.
\newblock {\em Oscillating patterns in image processing and nonlinear evolution
  equations}, volume~22 of {\em University Lecture Series}.
\newblock American Mathematical Society, Providence, RI, 2001.
\newblock The fifteenth Dean Jacqueline B. Lewis memorial lectures.

\bibitem{Mir}
M. Miranda.
\newblock Frontiere minimali con ostacoli. 
\newblock {\em Ann. Univ. Ferrara}, 16(1):29--37, 1971.

\bibitem{Phillips}
R. Phillips.
\newblock {\em Crystals, Defects and Microstructures.}
\newblock Cambridge University Press, 2001.

\bibitem{Sethian}
J. A. Sethian.
\newblock {\em Level Set Methods and Fast Marching Methods:
Evolving Interfaces in Computational Geometry, Fluid Mechanics,
Computer Vision and Materials Science.}
\newblock Cambridge University Press, 1999. 
\end{thebibliography}
\end{document}